\newcommand{\aut}[1]{\mathrm{Aut} #1}
\theoremstyle{definition}
\newtheorem{definition}{Definition}[section]
\theoremstyle{plain}
\newtheorem{prop}{Proposition}[section]
\theoremstyle{plain}
\newtheorem{theorem}{Theorem}[section]
\theoremstyle{plain}
\newtheorem{corollary}{Corollary}[theorem]
\theoremstyle{plain}
\newtheorem{lemma}[theorem]{Lemma}
\theoremstyle{remark}
\newtheorem{remark}[theorem]{Remark}
\theoremstyle{remark}
\theoremstyle{plain}
\title{On the homotopy type of the space of fiberings of $S^1 \times S^2$ by simple closed curves}
\author{Yi Wang, Jingye Yang}
\date{}
\begin{document}

\maketitle

\begin{abstract}
For most aspherical Seifert-fibered 3-manifolds $M$, the space of Seifert fiberings $SF(M)$ is known to have contractible components \cite{mccullough}. It is also known that the space of Hopf fiberings of the three-sphere is noncontractible \cite{dglmwy}. We provide the second example of a non-aspherical 3-manifold $M$ such that $SF(M)$ has noncontractible components. In particular, we show that certain components of $SF(S^1 \times S^2)$ are homotopy equivalent to a subspace homeomorphic to the identity-based loop space $\Omega SO(3)$, and we exhibit second homology generators for both connected components of $SF(S^1 \times S^2)$.
\end{abstract}

\section{Introduction}\label{sec:intro}


A \emph{Seifert-fibered 3-manifold} $M$ is a compact $S^1$-bundle over a 2-dimensional orbifold. This can be viewed as a compact 3-manifold that can be decomposed into a disjoint union of circles. 

\begin{remark}
For this paper, our fibers will be unoriented. 
\end{remark}

Let $M$ be a Seifert-fibered 3-manifold with Seifert fibration map $p: M \to \mathcal{O}$, where $\mathcal{O}$ is the base 2-orbifold. A \emph{Seifert fibering associated to $p$}, denoted $F_p$ is a decomposition into disjoint circles such that each circle is a fiber of $p$.

\begin{definition}
A \emph{fiber-preserving diffeomorphism} of a Seifert-fibered 3-manifold $M$ is a diffeomorphism that takes fibers of $F_p$ to fibers of $F_p$. The fiber-preserving diffeomorphisms form a subgroup $\aut(F_p) \subset Diff(M)$.
\end{definition}

\begin{definition}
We denote by $SF(M, p) = Diff(M)/\aut(F_p)$ the \emph{space of Seifert fiberings of $M$ equivalent to $F_p$}. Here $Diff(M)$ has the compact open topology, and the space has the natural structure of a coset space.
\end{definition}

\begin{remark}
This space can be interpreted as the space of circle foliations $F_p$ induced by $p: M \to \mathcal{O}$, i.e. each circle is a fiber of $p$. (Note that $p$ may have exceptional fibers.) Let $SF(M)$ be the union of all $SF(M, p)$ for all possible equivalence classes of Seifert fibrations $p$.
\end{remark}

Spaces of Seifert fiberings are known to have contractible components for many 3-manifolds, such as Haken Seifert-fibered 3-manifolds \cite{hkmr} and closed orientable Seifert-fibered 3-manifolds with hyperbolic base orbifolds \cite{mccullough}. In fact, for most compact aspherical orientable Seifert-fibered 3-manifolds $M$, it is known that $SF(M)$ has contractible components \cite{mccullough}. In this paper, we examine $SF(S^1 \times S^2, T)$, where $T: S^1 \times S^2 \to S^2$ is \emph{trivial Seifert fibration} that forgets the $S^1$ coordinate. We prove the following:

\begin{theorem}\label{thm:main}
The space $SF(S^1 \times S^2, T)$ is homotopy equivalent to a subspace $SF^{rigid}$ homeomorphic to $\Omega SO(3)$, the space of identity-based loops in $SO(3)$. This space has two connected components, each of which are homotopy equivalent to $\Omega(S^3)$. 
\end{theorem}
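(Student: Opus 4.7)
The plan is to analyze the principal fibration
\[ \mathrm{Aut}(F_T) \longrightarrow \mathrm{Diff}(S^1 \times S^2) \longrightarrow SF(S^1 \times S^2, T) \]
using Hatcher's computation of the homotopy type of $\mathrm{Diff}(S^1 \times S^2)$, while simultaneously exhibiting an explicit rigid subspace that realizes the resulting equivalence. To define $SF^{rigid}$, for each based loop $\gamma \in \Omega SO(3)$ I would consider the diffeomorphism $F_\gamma(\theta, x) = (\theta, \gamma(\theta) x)$ of $S^1 \times S^2$ and associate to $\gamma$ the fibering whose fibers are the circles $\{(\theta, \gamma(\theta) x_0) : \theta \in S^1\}$ for $x_0 \in S^2$. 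A direct computation shows that $F_{\gamma_1}^{-1} F_{\gamma_2} \in \mathrm{Aut}(F_T)$ forces $\gamma_1(\theta)^{-1} \gamma_2(\theta)$ to be independent of $\theta$, which together with $\gamma_i(0) = \mathrm{id}$ forces $\gamma_1 = \gamma_2$. Hence $\gamma \mapsto [F_\gamma]$ is injective, and I would define $SF^{rigid}$ to be its image; verifying that it is a homeomorphism onto its image reduces to constructing a local section of the projection $\mathrm{Diff}(S^1 \times S^2) \to SF(S^1 \times S^2, T)$ near the identity coset.

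Next, I would identify the homotopy type via the fibration. A fiber-preserving diffeomorphism has the form $(\theta, x) \mapsto (\psi_x(\theta), f(x))$ with $f \in \mathrm{Diff}(S^2)$ and $x \mapsto \psi_x$ a smooth family in $\mathrm{Diff}(S^1)$. Combining the Smale conjecture $\mathrm{Diff}(S^2) \simeq O(3)$ with $\mathrm{Diff}(S^1) \simeq O(2)$ and the fact that $\mathrm{Map}(S^2, O(2)) \simeq O(2)$ (since $H^1(S^2; \mathbb{Z}) = 0$), I would obtain $\mathrm{Aut}(F_T) \simeq O(3) \times O(2)$. Combined with Hatcher's homotopy equivalence $\mathrm{Diff}(S^1 \times S^2) \simeq O(2) \times O(3) \times \Omega SO(3)$, in which the $O(2) \times O(3)$ factor consists of linear diffeomorphisms of the two factors, the long exact sequence in homotopy then gives $SF(S^1 \times S^2, T) \simeq \Omega SO(3)$. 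To conclude that $SF^{rigid} \hookrightarrow SF(S^1 \times S^2, T)$ realizes this equivalence, I would trace the rigid construction through Hatcher's splitting to see that the loop $\gamma$ corresponds precisely to the $\Omega SO(3)$ factor of $\mathrm{Diff}(S^1 \times S^2)$, so the embedding induces an isomorphism on all homotopy groups.

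Finally, since $SO(3) \cong \mathbb{RP}^3$ has fundamental group $\mathbb{Z}/2$, the loop space $\Omega SO(3)$ has exactly two path components (corresponding to the trivial and nontrivial elements of $\pi_1$), and the universal cover $S^3 \to SO(3)$ induces a homotopy equivalence $\Omega S^3 \simeq \Omega SO(3)$ on each component. The main obstacle I anticipate is controlling the identification of $\mathrm{Aut}(F_T)$ as a subgroup of $\mathrm{Diff}(S^1 \times S^2)$ up to homotopy; specifically, I need to verify that its inclusion hits the $O(2) \times O(3)$ factor in Hatcher's splitting so that the quotient indeed recovers $\Omega SO(3)$ rather than some homotopically nontrivial variant. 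An alternative route would be to construct an explicit deformation retraction of $SF(S^1 \times S^2, T)$ onto $SF^{rigid}$ by straightening fibers via a flow argument, but this would require careful control at the diffeomorphism level and is likely more delicate.
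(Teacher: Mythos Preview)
Your proposal is correct and follows essentially the same route as the paper: set up the fibration $\mathrm{Aut}(F_T) \to \mathrm{Diff}(S^1 \times S^2) \to SF(S^1 \times S^2, T)$, compute $\mathrm{Aut}(F_T) \simeq O(2) \times O(3)$ via Smale's theorem on $\mathrm{Diff}(S^2)$ and the triviality of $\mathrm{Map}(S^2, S^1)$, invoke Hatcher's description of $\mathrm{Diff}(S^1 \times S^2)$, and realize the resulting $\Omega SO(3)$ explicitly via the rigid fiberings coming from $F_\gamma$. The ``obstacle'' you flag---checking that the inclusion $\mathrm{Aut}(F_T) \hookrightarrow \mathrm{Diff}(S^1 \times S^2)$ really lands on the $O(2) \times O(3)$ factor up to homotopy---is exactly what the paper dispatches by building the parallel fiber bundle $O(2) \times O(3) \to O(2) \times O(3) \times \Omega SO(3) \to SF^{rigid}$, applying the Five Lemma to the induced map of long exact sequences, and then upgrading the resulting weak equivalence to a genuine homotopy equivalence via Whitehead's theorem (using that $SF(S^1 \times S^2, T)$ is a separable Fr\'echet manifold, hence has the homotopy type of a CW complex, a point you should make explicit).
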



This theorem demonstrates that $S^1 \times S^2$ is a rare Seifert-fibered 3-manifold $M$ such that $SF(M)$ has noncontractible components. In addition, $S^1 \times S^2$ is the first known 3-manifold whose space of Seifert fiberings is not homotopy equivalent to a finite-dimensional CW-complex. (In \cite{dglmwy}, the space of Hopf fibrations of the 3-sphere is also shown to be homeomorphic to the disjoint union of two copies of $\mathbb{R}P^2$.)

\medskip

The proof of Theorem \ref{thm:main} will be based on Hatcher's determination of the homotopy type of $Diff(S^1 \times S^2)$ in \cite{hatcherpaper}, as well as the \emph{Cerf-Palais method} detailed in \cite{hkmr}. These tools will be discussed in Section \ref{sec:background}. 

\medskip

The space $\Omega SO(3)$ has two connected components, each homotopy equivalent to $\Omega S^3$. In addition, it is known (see \cite{hatcherpaper}) that 
\begin{equation}
	H_i(\Omega SO(3)) \cong \begin{cases}\mathbb{Z} & i \text{ even} \\ 0 & i \text{ odd}\end{cases}
\end{equation} 
Utilizing the reduced product suspension of James \cite{james}, we explicitly demonstrate a second homology generator for both connected components of $SF(S^1 \times S^2, T)$. The generators heavily involve Seifert fiberings associated to \emph{Gluck twists} $G_p$, which are rigid diffeomorphisms of $S^1 \times S^2$ which make one full rotation (around the $S^1$ factor) of the $S^2$ factor about an oriented axis associated to $p \in S^2$. The following is proven:

\begin{theorem}
Let $SF^{rigid}_T$, $SF^{rigid}_N$ be the connected components of $SF^{rigid}$ containing and not containing the trivial fibering, respectively. Let $s \in S^2$ be the south pole. Then $H_2(SF^{rigid}_T)$ is generated by $\{G_p * G_s \mid p \in S^2\}$ (here $*$ denotes concatenation), and $H_2(SF^{rigid}_N)$ is generated by $\{G_p \mid p \in S^2\}$.
\end{theorem}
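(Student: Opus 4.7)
The plan is to invoke Theorem~\ref{thm:main} to identify $SF^{rigid}$ with $\Omega SO(3)$, and then compute $H_2$ of each component by passing through the universal cover $\pi\colon S^3 \to SO(3)$ (unit quaternions acting on $\mathbb{R}^3$ by conjugation) together with James' theorem $\Omega\Sigma S^2 \simeq J(S^2)$. The induced map $\Omega\pi$ restricts to a homotopy equivalence $\Omega S^3 \xrightarrow{\simeq} SF^{rigid}_T$ on the identity component, while loops in $SF^{rigid}_N$ lift to the space $P_{1,-1}(S^3)$ of paths in $S^3$ from $1$ to $-1$, which is itself homotopy equivalent to $\Omega S^3$ via concatenation with a fixed reverse arc. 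Since $\Omega S^3$ is simply connected with $\pi_2(\Omega S^3)\cong \pi_3(S^3)\cong \mathbb{Z}$, Hurewicz and James together imply that the bottom-cell inclusion $S^2 = J_1(S^2) \hookrightarrow J(S^2) \simeq \Omega S^3$---adjoint to $\mathrm{id}\colon \Sigma S^2 \to S^3$---represents a generator of $H_2(\Omega S^3) \cong \mathbb{Z}$.

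The key calculation will be to identify the Gluck-twist map $\phi_N\colon S^2 \to SF^{rigid}_N$, $p \mapsto G_p$, with this bottom-cell inclusion after lifting. Parametrizing $G_p$ as the loop $t \mapsto R_p(2\pi t)$ in $SO(3)$, its unique lift to a path from $1$ to $-1$ in $S^3$ is the one-parameter family of unit quaternions
\[
\widetilde{G}_p(t) = \cos(\pi t) + \sin(\pi t)\, \widehat{p}, \qquad t \in [0,1].
\]
Taking the standard homeomorphism $\Sigma S^2 \cong S^3$ given by $[p,t] \mapsto (\sin(\pi t)\, p,\, \cos(\pi t)) \in \mathbb{R}^3 \times \mathbb{R}$, the assignment $p \mapsto \widetilde{G}_p$ coincides on the nose with the unit $\mathrm{id}\colon \Sigma S^2 \to S^3$ of the loop-suspension adjunction, i.e.\ with the bottom-cell inclusion. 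Applying $\Omega\pi$ then shows that $\phi_N$ carries the fundamental class $[S^2]$ to a generator of $H_2(SF^{rigid}_N)$, establishing the second half of the theorem.

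For $SF^{rigid}_T$, I would use that right concatenation with $G_s$ defines a homotopy equivalence $m_{G_s}\colon SF^{rigid}_N \to SF^{rigid}_T$ (a general feature of loop spaces, where concatenation with any fixed path is a homotopy equivalence between the source and target components). The composition $m_{G_s} \circ \phi_N$ is precisely $p \mapsto G_p * G_s$, so this map likewise sends $[S^2]$ to a generator of $H_2(SF^{rigid}_T)$.

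The main obstacle I anticipate is the calibration in the second paragraph: one must carefully fix basepoints, orientations of the axes $\widehat{p}$, and parametrizations of both the rotation and the suspension so that the lift of $p \mapsto G_p$ matches James' canonical bottom-cell map on the nose, not merely up to a sign or reparametrization. A related subtlety is ensuring that concatenation in $\Omega SO(3)$ is strictly compatible with concatenation of paths in $S^3$ under the covering, so that the two presentations of the generator genuinely agree rather than differ by a sign. Once this bookkeeping is settled, the remainder of the argument reduces to a direct application of James' theorem and the Hurewicz isomorphism.
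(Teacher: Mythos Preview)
Your proposal is correct and shares the same core ingredients as the paper's argument: the identification $SF^{rigid}\cong\Omega SO(3)$, the double cover $S^3\to SO(3)$, the James reduced product (equivalently, the bottom-cell inclusion $S^2\hookrightarrow J(S^2)\simeq\Omega S^3$ as adjoint to $\mathrm{id}_{S^3}$), and the use of concatenation by a fixed path to pass between the two components.

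The main difference is the order of attack and the style of the calibration. The paper treats $SF^{rigid}_T$ first: it realizes the James generator geometrically as the sphere of loops $\{\lambda'_x\}$ in $S^3$ (ray out to infinity along $x$, return along the vertical), pushes this down to $\Omega^T SO(3)$ via the ball model of $\mathbb{R}P^3$, and only then obtains the $SF^{rigid}_N$ generator by observing that concatenation with the fixed vertical ray gives a homotopy equivalence $\Omega^\infty S^3\to\Omega S^3$ (equivalently $\Omega^N SO(3)\to\Omega^T SO(3)$), whose image of the Gluck sphere is exactly $\{\lambda'_x\}$. You instead handle $SF^{rigid}_N$ first by the explicit quaternion lift $\widetilde{G}_p(t)=\cos(\pi t)+\sin(\pi t)\,\widehat{p}$, which matches the suspension coordinate on the nose, and then transport to $SF^{rigid}_T$ by right-concatenation with $G_s$. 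Your route is cleaner and dispenses with the informal ``pulling loops toward infinity'' homotopies the paper uses; the paper's route is more pictorial and ties directly into its visualization of fiberings. The calibration worry you flag (signs, basepoints, orientation of $\widehat{p}$) is real but harmless: any such discrepancy changes the class by $\pm 1$, which still generates $H_2\cong\mathbb{Z}$.
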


\subsection{Outline of the paper}

Section \ref{sec:background} covers background on Smale-type theorems and spaces of Seifert fiberings, and sketches the method through which Theorem \ref{thm:main} is proven. Section \ref{sec:trivialfibrations} introduces the relevant space of Seifert fiberings, i.e. the space of ``rigid" Seifert fiberings of $S^1 \times S^2$. Section \ref{sec:defretract} completes the proof of Theorem \ref{thm:main}. Section \ref{sec:trivialcomponenth2} uses the James reduced product to visualize a second homology generator of the component of $SF(S^2 \times S^2, T)$ containing the ``trivial" fiberings. Section \ref{sec:nontrivialcomponenth2} introduces Gluck fiberings of $S^1 \times S^2$ and shows that the second homology of the other component of $SF(S^1 \times S^2, T)$ is generated by a sphere of Gluck fiberings. 

\subsection{Acknowledgements}

The authors would like to acknowledge Herman Gluck, Dennis DeTurck, Jim Stasheff, and Ziqi Fang for helpful discussions that spurred the writing of this paper. 

\section{The Cerf-Palais method}\label{sec:background}


We first review the existing literature on the study of spaces of fibrations of 3-manifolds with circle fiber. This theory is intimately connected to \emph{Smale-type theorems}. In general, a Smale-type theorem states that for a 3-manifold $M$, a subgroup of $Diff(M)$ (most often the isometry group) is homotopy equivalent to $Diff(M)$. For instance, the original Smale conjecture, proven by Hatcher in \cite{hatchersmaleconjecture}, stated that $Isom(S^3) \cong O(4) \hookrightarrow Diff(S^3)$ is a homotopy equivalence. Numerous papers have been written proving analogous theorems for many manifolds, culminating in the \emph{generalized Smale conjecture}, proven in \cite{bamlerkleiner}:

\begin{theorem}[\cite{bamlerkleiner} \cite{bamlerkleinerconnectsum}]
If $g$ is a Riemannian metric of constant sectional curvature $\pm 1$ on $X$, the inclusion $Isom(M, g) \hookrightarrow Diff(M, g)$ is a homotopy equivalence.
\end{theorem}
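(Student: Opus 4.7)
The plan is to recognize the inclusion as the fiber inclusion of a principal fibration whose base is the moduli space of metrics of constant sectional curvature $\pm 1$, and then to contract that base. Consider the natural right action of $\mathrm{Diff}(M)$ on the space $\mathcal{M}_{\pm 1}(M)$ of smooth Riemannian metrics on $M$ of constant sectional curvature $\pm 1$ by pullback. By the Killing--Hopf theorem, any two constant sectional curvature $\pm 1$ metrics on a fixed closed smooth manifold are globally isometric, so this action is transitive, with stabilizer at $g$ equal to $\mathrm{Isom}(M,g)$. Ebin's slice theorem for the action of $\mathrm{Diff}(M)$ on the space of all Riemannian metrics upgrades this to a genuine fibration
\[
\mathrm{Isom}(M,g) \hookrightarrow \mathrm{Diff}(M) \twoheadrightarrow \mathcal{M}_{\pm 1}(M),
\]
and the long exact sequence in homotopy shows that the inclusion is a weak equivalence if and only if $\mathcal{M}_{\pm 1}(M)$ is weakly contractible. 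The entire theorem therefore reduces to contracting the moduli of constant curvature metrics.

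For the hyperbolic case this is essentially classical: Mostow rigidity and deformation rigidity of hyperbolic structures imply that, up to diffeomorphism, the constant curvature $-1$ metric is unique, and $\mathcal{M}_{-1}(M)$ deformation retracts onto the $\mathrm{Diff}(M)$-orbit of $g$ via a canonical isotopy assembled from harmonic map diffeomorphisms, Ebin's slice, and (in dimension $3$) Gabai's work. The spherical case is substantially harder because no such rigidity is available, and $\mathcal{M}_{+1}(M)$ must be contracted directly. The main strategy, which is the content of \cite{bamlerkleiner} and \cite{bamlerkleinerconnectsum}, is to build a canonical deformation retraction of the entire space of Riemannian metrics $\mathcal{M}(M)$ onto $\mathcal{M}_{+1}(M)$ using singular Ricci flow. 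Concretely one would: (i) produce, for each smooth initial metric $g_0$ on $M$, a unique singular Ricci flow spacetime starting at $g_0$ that becomes extinct in finite time; (ii) rescale near the extinction time to extract a canonical constant-curvature limit; (iii) show that this assignment is continuous in $g_0$ in the smooth topology, yielding a homotopy $H: \mathcal{M}(M) \times [0,1] \to \mathcal{M}(M)$ whose terminal image lies in $\mathcal{M}_{+1}(M)$ and which restricts (up to rescaling) to the identity on $\mathcal{M}_{+1}(M)$. Restricting $H$ to $\mathcal{M}_{+1}(M)$ then null-homotopes it.

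The hard part is step (iii): continuity of the singular Ricci flow in the initial data. Singular flows develop neck pinches, cap formations, and Hamilton-style extinction, and one must show that nearby smooth initial metrics yield nearby singular spacetimes in a topology strong enough to control the limiting round metric. This requires Perelman's canonical-neighborhood theorem, quantitative compactness for Ricci flow with bounded curvature on large scales, and ultimately the uniqueness of singular Ricci flow from smooth initial data, all of which form the technical core of Bamler--Kleiner's work. Granting (iii), the retraction exists, $\mathcal{M}_{+1}(M)$ is weakly contractible, and the fibration above yields the desired homotopy equivalence $\mathrm{Isom}(M,g) \simeq \mathrm{Diff}(M)$.
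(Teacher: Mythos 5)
The paper does not prove this statement: it is quoted verbatim as background, with the proof deferred entirely to \cite{bamlerkleiner} and \cite{bamlerkleinerconnectsum}, so there is no in-paper argument to compare yours against. Your opening reduction is the correct and standard one: the pullback action of $\mathrm{Diff}(M)$ on the space of constant-curvature metrics is transitive with stabilizer $\mathrm{Isom}(M,g)$, Ebin's slice theorem makes $\mathrm{Diff}(M) \to \mathcal{M}_{\pm 1}(M)$ a fibration, and the long exact sequence reduces the theorem to weak contractibility of $\mathcal{M}_{\pm 1}(M)$. That is genuinely how Bamler--Kleiner set up the problem.

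There are, however, two real gaps in what follows. First, the hyperbolic case is not ``essentially classical.'' Mostow rigidity gives uniqueness of the hyperbolic metric up to isometry, i.e.\ it tells you that $\mathcal{M}_{-1}(M)$ \emph{is} the single orbit $\mathrm{Diff}(M)/\mathrm{Isom}(M,g)$; it says nothing about the homotopy type of that orbit, which is exactly the content of the theorem. Asserting that the moduli space ``deformation retracts onto the orbit'' is circular. Contractibility in the hyperbolic case is Gabai's theorem (and is reproved by Bamler--Kleiner's Ricci flow methods), and it is a major result, not a corollary of rigidity. Second, in the spherical case the step ``rescale near the extinction time to extract a canonical constant-curvature limit'' does not work as stated: for a general initial metric the singular Ricci flow undergoes neck pinches and disconnections before extinction, so there is no single rescaling limit living on $M$, and continuity in the initial data alone does not assemble a retraction. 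The actual argument in \cite{bamlerkleiner} replaces this with an inductive ``partial homotopy'' and rounding construction precisely to circumvent the topology changes of the flow. Your identification of continuity/uniqueness of singular Ricci flow as the technical core is accurate, but the passage from that input to contractibility of $\mathcal{M}_{+1}(M)$ is where the real work lies, and your sketch elides it.
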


A similar theorem has been shown for nilmanifolds \cite{bamlerkleinernilmanifold}, as well as Seifert-fibered spaces with hyperbolic base orbifold \cite{mccullough}. The analogous theorem does not hold for all 3-manifolds, as shown by Hatcher:

\begin{theorem}[\cite{hatcherpaper}]
The diffeomorphism group $Diff(S^1 \times S^2)$ is homotopy equivalent to a subgroup homeomorphic to $O(2) \times O(3) \times \Omega SO(3)$, which is not isomorphic to $Isom(S^1 \times S^2)$.
\end{theorem}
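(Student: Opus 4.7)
The plan is to follow Hatcher's original approach in \cite{hatcherpaper}, which constructs an explicit subgroup $H \subset Diff(S^1 \times S^2)$ whose underlying space is homeomorphic to $O(2) \times O(3) \times \Omega SO(3)$, and then shows that the inclusion is a weak homotopy equivalence. The subgroup $H$ is assembled from three families of diffeomorphisms: $O(2)$ acting by rotations and reflections of the $S^1$ factor, $O(3)$ acting on the $S^2$ factor, and a based loop $\gamma \in \Omega SO(3)$ acting by the ``twisting'' diffeomorphism $(\theta, x) \mapsto (\theta, \gamma(\theta) \cdot x)$. A direct inspection shows that the parametrization $O(2) \times O(3) \times \Omega SO(3) \to Diff(S^1 \times S^2)$ is an injection onto a subgroup, yielding a subspace homeomorphic to the product (even though the group structure is a semidirect rather than direct product).

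The homotopy equivalence is established via a Cerf--Palais fibration argument. Fixing the reference 2-sphere $\Sigma_0 = \{*\} \times S^2$, the restriction map gives a fibration
\[
Diff(S^1 \times S^2,\ \Sigma_0) \longrightarrow Diff(S^1 \times S^2) \longrightarrow \mathrm{Emb}(S^2,\ S^1 \times S^2)_0,
\]
whose base is the component of embeddings smoothly isotopic to $\Sigma_0$. Via a parametric light-bulb argument, this embedding space turns out to have the homotopy type of a unit-tangent-frame bundle of $S^2$ twisted over $S^1 \times S^2$; after comparison with the explicit subgroup, the $\Omega SO(3)$ factor emerges. The fiber $Diff(S^1 \times S^2, \Sigma_0)$ is then analyzed by cutting along $\Sigma_0$, reducing the problem to $Diff(S^2 \times I,\ \partial)$, which by the Smale conjecture for $S^3$ together with standard pseudoisotopy arguments has the homotopy type of $O(2) \times O(3)$. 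Piecing these facts together through the long exact sequences of homotopy groups yields the asserted homotopy equivalence.

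The main obstacle is the detailed analysis of the embedding space of 2-spheres --- specifically, showing that a loop of non-separating 2-spheres in $S^1 \times S^2$ detects the generator of $\pi_1(SO(3)) = \mathbb{Z}/2$ via a Gluck twist, and that no higher homotopy appears beyond what $\Omega SO(3)$ accounts for. This requires parametric light-bulb moves combined with careful sphere-swapping isotopies, and is the technical heart of Hatcher's paper. Finally, the nonisomorphism with $Isom(S^1 \times S^2) \cong O(2) \times O(3)$ is immediate once the homotopy equivalence is in hand, since $\Omega SO(3)$ is infinite-dimensional while the isometry group is a compact Lie group.
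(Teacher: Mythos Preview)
The paper does not prove this statement; it is quoted from \cite{hatcherpaper} as a background result and thereafter used as a black box (most notably in the proof of Theorem~\ref{thm:main}). There is no argument in the paper to compare your proposal against.

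That said, your outline of Hatcher's method has the three factors mislocated in a way that would derail an actual proof. In Hatcher's argument the $\Omega SO(3)$ arises from the \emph{fiber}, not the base: after cutting $S^1\times S^2$ along $\Sigma_0$ one is led to $Diff(S^2\times I\ \mathrm{rel}\ \partial)$, and the central computation is that this space is homotopy equivalent to $\Omega O(3)$, realized precisely by the twists $(x,t)\mapsto(\gamma(t)\cdot x,t)$. Your assertion that $Diff(S^2\times I,\partial)\simeq O(2)\times O(3)$ is not correct under either reading of ``$\partial$''. Moreover, $O(2)$ cannot sit inside the stabilizer $Diff(S^1\times S^2,\Sigma_0)$ at all, since any nontrivial rotation of the $S^1$ factor moves $\Sigma_0=\{*\}\times S^2$ off itself; only a single reflection survives. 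The $O(2)$ and $O(3)$ are instead accounted for on the embedding-space side (position of the slice sphere along $S^1$, and its parametrization via $Diff(S^2)\simeq O(3)$). Finally, appealing to ``the Smale conjecture for $S^3$'' is anachronistic: Hatcher's $S^1\times S^2$ computation predates his proof of the Smale conjecture and relies only on Smale's $Diff(S^2)\simeq O(3)$ together with Cerf's theorem.
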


%

Smale-type theorems are the main tool used to determine the homotopy types of spaces of Seifert fiberings. The general philosophy for using Smale-type theorems on $M$ to study the topology of $SF(M)$, which we will call the \emph{Cerf-Palais method} (stemming from \cite{hkmr}), is as follows. Let $M$ be a Seifert-fibered 3-manifold.
\begin{enumerate}
	\item Given a Seifert fibering $F_p$ induced by a fibration $p$, let $\aut(F_p) \subset Diff(M)$ be the subspace of diffeomorphisms that maps fibers of $F_p$ to fibers of $F_p$.
	\item Using Theorem 3.7.3 in \cite{hkmr}, construct the topological fiber bundle in the Frechet category:
	\begin{equation}
		\aut(F_p) \to Diff(M) \to SF(M, p)
	\end{equation}
	\item Prove or cite a Smale-type theorem to show that $Diff(M)$ is homotopy equivalent to a Frechet submanifold $Rigid(M)$, called the group of \emph{rigid diffeomorphisms of $M$}. 
	\item Let $SF^{rigid}(M, p)$ be the orbit of $F_p$ under $Rigid(M)$, and let $\aut^{rigid}(F_p)$ be the rigid diffeomorphisms that map fibers of $F_p$ to fibers of $F_p$. Construct a fiber bundle 
	\begin{equation}
		\aut^{rigid}(F_p) \to Rigid(M) \to SF^{rigid}(M, p)
	\end{equation}
	This step often requires some proof, depending on the involved spaces.
	\item Construct the fiber bundle diagram 
	\begin{equation}
		\begin{tikzcd}
			\aut(F_p) \arrow[r] & Diff(M) \arrow[r] & SF(M, p) \\
			\aut^{rigid}(F_p) \arrow[r] \arrow[u] & Rigid(M) \arrow[r] \arrow[u] & SF^{rigid}(M, p) \arrow[u]       
		\end{tikzcd}
	\end{equation}
	where the upper arrows are all inclusions.
	\item Construct the following sequence stemming from the long exact sequence of fiber bundles: 
	\begin{equation}\label{diag:longexact}
		\begin{tikzcd}
			\dots \arrow[r] & \pi_n(Diff(M)) \arrow[r]                            & \pi_n(SF(M, p)) \arrow[r] & \pi_{n-1}(\aut(F_p)) \arrow[r]                     & \dots \\
			\dots \arrow[r] & \pi_n(Rigid(M)) \arrow[r] \arrow[u] & \pi_n(SF^{rigid}(M, p)) \arrow[r] \arrow[u]       & \pi_{n-1}(\aut^{rigid}(F_p)) \arrow[r] \arrow[u] & \dots
		\end{tikzcd}
	\end{equation}
	\item Prove that the left upper arrow in Diagram \ref{diag:longexact} is a homotopy equivalence. 
	\item By the above step and the Smale-type theorem, all left and right arrows in the above diagram are isomorphisms. Use the Five Lemma to conclude that $SF^{rigid}(M, p) \to SF(M, p)$ is a weak homotopy equivalence.
	\item Conclude, using Theorem 3.7.3 in \cite{hkmr}, that $SF(M, p)$ is a separable topological Frechet manifold. Thus $SF(M, p)$ has the homotopy type of a CW-complex. 
	\item Show that $SF^{rigid}(M, p)$ is a CW complex.
	\item Conclude, using Whitehead's theorem, that $SF^{rigid}(M, p) \to SF(M, p)$ is a homotopy equivalence.
\end{enumerate}

The full Cerf-Palais method is usually not necessary. In fact, in many cases (e.g. Theorem 3.9.1 in \cite{hkmr}), $\aut(F_p) \to Diff(M)$ is a homotopy equivalence, which immediately implies that $SF(M, p)$ is contractible. This is the case for the results of \cite{hkmr} and \cite{mccullough}, which culminate in: 

\begin{theorem}[\cite{mccullough}]
Let $M$ be a compact orientable aspherical Seifert-fibered space other than a non-Haken infranilmanifold. Then each component of $SF(M)$ is contractible. 
\end{theorem}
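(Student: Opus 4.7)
The plan is to execute the Cerf-Palais method outlined above, exploiting the fact that for the manifolds in question the full machinery degenerates at an early stage: one identifies a rigid subgroup $Rigid(M) \subset Diff(M)$ that already lies inside $\aut(F_p)$, forcing the rigid fibering space $SF^{rigid}(M,p)$ to reduce to the single point $\{F_p\}$. From there the five-lemma at the level of homotopy groups in Diagram \ref{diag:longexact} immediately delivers weak contractibility of $SF(M,p)$.

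First I would partition the allowed $M$ by the geometry of the base 2-orbifold $\mathcal{O}$ into hyperbolic, Euclidean, and spherical cases. For each case I would invoke the appropriate Smale-type theorem --- the generalized Smale conjecture of Bamler--Kleiner for the hyperbolic and spherical pieces (which for $S^3$ is the original theorem of Hatcher), and the nilmanifold Smale conjecture for the Haken Euclidean pieces --- to identify a finite-dimensional group $Rigid(M) \subset Diff(M)$, typically the isometry group of the natural geometric metric, such that the inclusion is a homotopy equivalence. This supplies steps (3)--(7) of the Cerf-Palais method uniformly across the geometric regimes.

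The heart of the argument is then to verify that $Rigid(M) \subset \aut(F_p)$ for every Seifert structure $p$ on $M$. For the manifolds under consideration, the Seifert fibration is \emph{intrinsic}: its isotopy class is characterized by a geometric or topological invariant --- for instance, the center of $\pi_1(M)$ picks out the regular fiber class, and in the geometric model the regular fibers are realized by the orbits of a canonical $S^1$-action commuting with isometries (or equivalently by the shortest closed geodesics of a suitably chosen metric). This intrinsic characterization forces every element of $Rigid(M)$ to permute fibers, whence $\aut^{rigid}(F_p) = Rigid(M)$ and $SF^{rigid}(M,p) = \{F_p\}$. Feeding this into the Cerf-Palais diagram and chasing the resulting long exact sequence collapses every row to triviality above dimension zero.

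Finally, to upgrade from weak contractibility to contractibility, I would invoke Theorem 3.7.3 of \cite{hkmr} to see that $SF(M,p)$ is a separable topological Frechet manifold and hence has the homotopy type of a CW complex; Whitehead's theorem then completes the proof. The main obstacle is the intrinsicness step: verifying fiber-preservation of $Rigid(M)$ uniformly across the three geometric regimes is delicate, and this is precisely where the excluded non-Haken infranilmanifolds fail --- those manifolds admit isotopically distinct Seifert fibrations that their rigid diffeomorphisms can interchange, so $SF^{rigid}$ is not a point and the collapse of the Cerf-Palais diagram breaks down, explaining both the necessity of the exclusion and the source of any non-contractibility one might hope to find.
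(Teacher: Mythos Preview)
This theorem is quoted from \cite{mccullough} without proof; the paper's only indication of method is the sentence immediately preceding it, namely that in these cases one shows directly that $\aut(F_p) \hookrightarrow Diff(M)$ is a homotopy equivalence, so that the fibration $\aut(F_p) \to Diff(M) \to SF(M,p)$ forces $SF(M,p)$ to be weakly contractible without ever invoking the full Cerf--Palais diagram. Your route is different and contains a genuine gap. Granting that $Rigid(M) \subset \aut(F_p)$ with $Rigid(M) \hookrightarrow Diff(M)$ a homotopy equivalence, you correctly get $\aut^{rigid}(F_p) = Rigid(M)$ and $SF^{rigid}(M,p) = \{F_p\}$, but to run the five-lemma on Diagram~\ref{diag:longexact} you still need step~(7): that the vertical arrow $\aut^{rigid}(F_p) \to \aut(F_p)$ is itself a homotopy equivalence. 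You assert that the Smale-type theorem ``supplies steps (3)--(7),'' but it supplies only the $Rigid(M) \to Diff(M)$ arrow. A chain of inclusions $Rigid \subset \aut(F_p) \subset Diff(M)$ with the outer composite a homotopy equivalence does \emph{not} force the inner inclusions to be equivalences: take $Rigid = \{e\}$, $\aut = S^1$, and $Diff = U(\mathcal{H})$ the contractible unitary group of an infinite-dimensional Hilbert space; then $Diff/\aut \simeq K(\mathbb{Z},2)$, not a point. If you chase the top long exact sequence under your hypotheses, surjectivity of $\pi_*(\aut(F_p)) \to \pi_*(Diff(M))$ comes for free, but $\pi_n(SF)$ vanishes only if that map is also injective --- i.e.\ precisely the statement that $\aut(F_p) \to Diff(M)$ is a weak equivalence, which is what \cite{mccullough} proves directly. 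The detour through $Rigid(M)$ therefore buys nothing.

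A secondary issue: aspherical Seifert-fibered spaces never have spherical base orbifold (those cases are covered by $S^3$ or $S^2 \times \mathbb{R}$), and the four geometries that do arise ($\mathbb{H}^2 \times \mathbb{R}$, $\widetilde{SL_2}$, $\mathbb{E}^3$, Nil) are not of constant curvature $\pm 1$, so the Bamler--Kleiner theorem you cite is not the relevant Smale-type input here; the pertinent results are those of \cite{mccullough}, \cite{hkmr}, and \cite{bamlerkleinernilmanifold}.
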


In the case of non-aspherical manifolds, spaces of circle fibrations can have interesting topology. Upcoming work \cite{dglmwy} explores the space of Seifert fiberings of the three-sphere. The full Cerf-Palais method is used to prove the following theorem:

\begin{theorem}[\cite{dglmwy}]
The space of fiberings of $S^3$ by simple closed curves has the homotopy type of the disjoint union of a pair of two spheres if the fibers are oriented, and the disjoint union of a pair of two real projective planes if the fibers are unoriented.
\end{theorem}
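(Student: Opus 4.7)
The plan is to run the full Cerf--Palais method outlined above, with the classical Smale conjecture $\mathrm{Diff}(S^3) \simeq O(4)$ (Hatcher) serving as the required Smale-type theorem. Starting from a left-handed Hopf fibration $F_p$, I would form the principal bundle $\aut(F_p) \to \mathrm{Diff}(S^3) \to SF(S^3,p)$ via Theorem 3.7.3 of \cite{hkmr}, set $\mathrm{Rigid}(S^3) = O(4)$, define $SF^{\mathrm{rigid}}$ to be the $O(4)$-orbit of $F_p$, and assemble the bundle-map diagram of \Cref{diag:longexact}. The goal is then to identify $SF^{\mathrm{rigid}}$ explicitly, check that each vertical inclusion is a homotopy equivalence, and upgrade the resulting weak equivalence $SF^{\mathrm{rigid}} \to SF(S^3, p)$ to a genuine one via Whitehead.

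The explicit identification of $SF^{\mathrm{rigid}}$ reduces to a stabilizer calculation under the double cover $SU(2) \times SU(2) \to SO(4)$, where $(g_L, g_R)$ acts on $S^3$ by $q \mapsto g_L q g_R^{-1}$. The left Hopf fibration is the partition of $S^3$ into orbits of a circle $S^1 \subset SU(2)_L$, and $(g_L,g_R)$ preserves this partition precisely when $g_L$ normalizes $S^1$. For unoriented fibers this gives stabilizer $(N_{SU(2)}(S^1) \times SU(2))/\mathbb{Z}_2$ with $SO(4)$-orbit $SU(2)/N_{SU(2)}(S^1) \cong SO(3)/O(2) \cong \mathbb{R}P^2$, while for oriented fibers we must further restrict to $g_L \in S^1$ (since the nontrivial coset of $N(S^1)/S^1$ is represented by $j$, and $je^{it}j^{-1} = e^{-it}$ reverses the circle's direction), yielding orbit $SU(2)/S^1 \cong S^2$. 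Passing from $SO(4)$ to $O(4)$, quaternion conjugation $q \mapsto \bar q$ intertwines the two $SU(2)$-factors and thus sends the left-handed Hopf fibration to a right-handed one, while a direct check shows no orientation-reversing isometry can stabilize a given Hopf fibration. Hence the $O(4)$-orbit of $F_p$ acquires one additional component (the right-handed Hopf fibrations), giving $\mathbb{R}P^2 \sqcup \mathbb{R}P^2$ unoriented and $S^2 \sqcup S^2$ oriented.

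The hard step is step~(7) of the Cerf--Palais template: showing that the inclusion $\aut^{\mathrm{rigid}}(F_p) \hookrightarrow \aut(F_p)$ is a homotopy equivalence, i.e., a fiberwise Smale-type theorem adapted to the Hopf fibration. My approach would be to project $\aut(F_p) \to \mathrm{Diff}(S^2)$ by descent to the base, invoke Smale's theorem $\mathrm{Diff}(S^2) \simeq O(3)$, and then deformation retract the kernel (bundle automorphisms covering the identity of $S^2$) onto its isometric piece by straightening gauge transformations fiberwise to rigid rotations of each $S^1$. Granting this, the Five Lemma applied to \Cref{diag:longexact} produces a weak equivalence $SF^{\mathrm{rigid}} \to SF(S^3, p)$; since $SF^{\mathrm{rigid}}$ is a finite disjoint union of smooth closed surfaces, step~(10) is immediate and Whitehead's theorem delivers the claimed homotopy equivalence.
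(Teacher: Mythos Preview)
Your proposal is correct and follows exactly the approach the paper indicates: the paper does not itself prove this cited theorem, but explicitly states that ``the full Cerf-Palais method is used'' in \cite{dglmwy}, which is precisely the template you execute, with the classical Smale conjecture $O(4)\simeq\mathrm{Diff}(S^3)$ supplying the rigid model and the quaternionic stabilizer computation yielding $SU(2)/N_{SU(2)}(S^1)\cong\mathbb{R}P^2$ (unoriented) and $SU(2)/S^1\cong S^2$ (oriented), doubled by the orientation-reversing component of $O(4)$. Your sketch of step~(7)---projecting $\aut(F_p)\to\mathrm{Diff}(S^2)$, invoking Smale's $\mathrm{Diff}(S^2)\simeq O(3)$, and straightening the vertical gauge group fiberwise---is the natural argument and is consistent with how the present paper handles the analogous step for $S^1\times S^2$.
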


\begin{remark}
This paper (as well as most cases in the literature) deals with spaces of non-oriented Seifert fiberings. If we were to consider oriented Seifert fiberings, the results would be slightly changed. The changes involve factors of $\mathbb{Z}/2\mathbb{Z}$. 
\end{remark}

The proof of Theorem \ref{thm:main} utilizes the full Cerf-Palais method using Hatcher's Smale-type result for $M = S^1 \times S^2$ and $p = T$, the trivial Seifert fibration. Notably, in this case, $Rigid(M)$ is not the isometry group of $M$. 


\section{Rigid Seifert fiberings of $S^1 \times S^2$}\label{sec:trivialfibrations}


Hatcher proved in \cite{hatcherpaper} that the diffeomorphism group $Diff(S^1 \times S^2)$ is homotopy equivalent to the subgroup $O(2) \times O(3) \times \Omega SO(3)$. The diffeomorphisms in this subgroup can be described as follows: given $(\alpha, \beta, \gamma_t) \in O(2) \times O(3) \times \Omega SO(3)$ and $(x, y) \in S^1 \times S^2$, we have the following:
\begin{equation}
(\alpha, \beta, \gamma_t) \cdot (x, y) = (\alpha(x), \beta(\gamma_x(y)))
\end{equation}
In other words, the $O(2)$ component rotates in the $S^1$-direction, the loop in $SO(3)$ dictates where the $S^2$-coordinate lands, and then the entire circles' worth of copies of $S^2$ are shifted by the $O(3)$ component. The order of the latter two actions is of note, as they do not commute.

\begin{definition}
Let $T$ denote the \emph{trivial Seifert fibering of $S^1 \times S^2$}, whose fibers are given by $\{(t, *), t \in S^1\}$ for any fixed $* \in S^2$. This is induced by the trivial fibration that forgets the $S^1$ coordinate, which is denoted $F_T$.
\end{definition}

In the context of the Cerf-Palais method, $O(2) \times O(3) \times \Omega SO(3)$ will be the rigid diffeomorphisms of $S^1 \times S^2$. With this in mind, we define:

\begin{definition}
The \emph{space of rigid Seifert fiberings of $S^1 \times S^2$}, denoted $SF^{rigid} \subset SF(S^1 \times S^2, T)$, is the orbit of $F_T$ under the action of the diffeomorphism subgroup $O(2) \times O(3) \times \Omega SO(3)$. 
\end{definition}

We first define a topology on $SF^{rigid}$ via a bijection $\Phi: \Omega SO(3) \to SF^{rigid}$.

\begin{definition}
Given an identity-based loop $\lambda_t \in \Omega SO(3)$, define a Seifert fibering $\Phi(\lambda_t)$ of $S^1 \times S^2$ by simple closed curves as follows. Start at a point $t_0 \in S^1$. Then the fiber of $\Phi(\lambda_t)$ crossing $(t_0, y) \in S^1 \times S^2$ is given by $(t, \lambda_t(y))$ as $t$ traverses $S^1$. 
\end{definition}

\begin{prop}\label{prop:bijection}
$\Phi: \Omega SO(3) \to SF^{rigid}$ is a bijection.
\end{prop}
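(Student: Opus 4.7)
The plan is to interpret $\Phi$ in terms of the rigid group action on $F_T$ and then carry out an explicit computation, so that the bijection becomes almost formal. First I would verify well-definedness: the rigid diffeomorphism $(\mathrm{id}_{S^1}, \mathrm{id}_{S^2}, \lambda_t) \in O(2) \times O(3) \times \Omega SO(3)$ acts on $S^1 \times S^2$ by $(t, y) \mapsto (t, \lambda_t(y))$, and thus sends the $F_T$-fiber $\{(t, y_0) : t \in S^1\}$ to the curve $\{(t, \lambda_t(y_0)) : t \in S^1\}$. Therefore $\Phi(\lambda) = (\mathrm{id}, \mathrm{id}, \lambda) \cdot F_T$, which shows simultaneously that $\Phi(\lambda)$ is a genuine Seifert fibering (being the image of $F_T$ under a diffeomorphism) and that it lies in $SF^{rigid}$.

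For injectivity I would compare fibers directly. If $\Phi(\lambda) = \Phi(\mu)$, fix $y_0 \in S^2$ and look at the fibers through $(0, y_0)$: both $\{(t, \lambda_t(y_0)) : t \in S^1\}$ and $\{(t, \mu_t(y_0)) : t \in S^1\}$ are sections over the projection $S^1 \times S^2 \to S^1$, so each slice $\{t\} \times S^2$ meets the fiber in exactly one point. The $t$-coordinate therefore canonically parameterizes each fiber and forces $\lambda_t(y_0) = \mu_t(y_0)$ for all $t, y_0$; hence $\lambda = \mu$.

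For surjectivity the key structural observation is that $O(2) \times O(3) \times \{e\}$ stabilizes $F_T$: an $\alpha \in O(2)$ permutes each fiber within itself, while a $\beta \in O(3)$ permutes the fibers among themselves. So given an arbitrary $(\alpha, \beta, \gamma) \cdot F_T \in SF^{rigid}$, it should be expressible as $(\mathrm{id}, \mathrm{id}, \lambda) \cdot F_T$ for a unique $\lambda$. To pin down this $\lambda$ I would compute the fiber of $(\alpha, \beta, \gamma) \cdot F_T$ through $(0, z)$ using the Hatcher formula $(\alpha, \beta, \gamma) \cdot (x, y) = (\alpha(x), \beta(\gamma_x(y)))$: reparameterizing by $s = \alpha(t)$ and relabeling by the point at $s = 0$ yields
\[ \lambda_s = \beta \circ \gamma_{\alpha^{-1}(s)} \circ \gamma_{\alpha^{-1}(0)}^{-1} \circ \beta^{-1}. \]
Then $\lambda_0 = \mathrm{id}$ by inspection, and the only subtle point — that $\lambda_s$ lands in $SO(3)$ rather than merely $O(3)$ — follows from the normality of $SO(3)$ in $O(3)$ (so that conjugation by $\beta$ preserves $SO(3)$) combined with $\gamma_{\alpha^{-1}(s)} \gamma_{\alpha^{-1}(0)}^{-1} \in SO(3)$.

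The main (modest) obstacle is this last verification together with the bookkeeping required to reparameterize the fiber in a way that exhibits the canonical based representative at $s = 0$; the rest of the argument is formal manipulation of the rigid action.
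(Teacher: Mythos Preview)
Your proposal is correct and follows essentially the same route as the paper's proof: both arguments verify injectivity by comparing the unique fiber through a common point and surjectivity by exhibiting the explicit based loop $\beta \circ \gamma_{\alpha^{-1}(s)} \circ \gamma_{\alpha^{-1}(0)}^{-1} \circ \beta^{-1}$ (the paper writes this as $\beta \circ \lambda_{\alpha^{-1}(t)} \circ \zeta^{-1}$ with $\zeta = \beta \circ \lambda_{\alpha^{-1}(t_0)}$). Your version is slightly more thorough in that you explicitly check well-definedness via the identification $\Phi(\lambda) = (\mathrm{id},\mathrm{id},\lambda)\cdot F_T$ and verify that the constructed loop lands in $SO(3)$ by normality, points the paper leaves implicit.
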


\begin{proof}
If we have two distinct loops $\lambda \neq \gamma$ both in $\Omega SO(3)$, there exists some $(t, *) \in S^1 \times S^2$ such that $\lambda_t(*) \neq \gamma_t(*)$. Then $\Phi(\lambda)$ contains a fiber with the points $(t_0, *)$ and $(t, \lambda_t(*))$ while $\Phi(\gamma)$ contains a fiber with the points $(t_0, *)$ and $(t, \gamma_t(*))$. Thus $\Phi(\lambda)\neq \Phi(\gamma)$, and so $\Phi$ is injective. Now suppose we have a fibering $F \in SF^{rigid}$. By definition, this means that $SF^{rigid}$ is equal to the action of $(\alpha, \beta, \lambda) \in O(2) \times O(3) \times \Omega SO(3)$ on $F_T$, so each fiber is parameterized by $(\alpha(t), \beta(\lambda_t(*)))$ as $t$ varies throughout $S^1$, for each $* \in S^2$. Let $\zeta = \beta \circ \lambda_{\alpha^{-1}(t_0)}$, and let $\gamma_t = \beta \circ \lambda_{\alpha^{-1}(t)} \circ \zeta^{-1}$. Then the resulting fibering $\Phi(\gamma)$ will be parameterized by $(t, \beta \circ \lambda_{\alpha^{-1}(t)} \circ \zeta^{-1}(*))$. Notice that $\gamma_{t_0}$ is the identity, so $\gamma_t \in \Omega SO(3)$. In addition, up to reparameterization by $\alpha$ and $\zeta^{-1}$, this is the same fibering as the one parameterized by $(\alpha(t), \beta(\lambda_t(*)))$, i.e. $\Phi(\gamma) = F$. So $\Phi$ is surjective, and we have shown that $\Phi$ is a bijection.
\end{proof}

Endow $SF^{rigid}$ with the topology such that a set $U \subset SF^{rigid}$ is open if and only if $\Phi^{-1}(U)$ is open in the compact open topology on $\Omega SO(3)$. This makes $\Phi$ a homeomorphism. 

\begin{lemma}
\begin{equation}
	\aut(F_T) \cap (O(2) \times O(3) \times \Omega SO(3)) = O(2) \times O(3) \times \{id\}
\end{equation}
\end{lemma}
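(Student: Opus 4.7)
The plan is to verify both inclusions, where the nontrivial direction is forcing the loop component $\gamma_t$ to be the constant identity loop. The reverse inclusion $O(2) \times O(3) \times \{\mathrm{id}\} \subset \aut(F_T)$ is essentially immediate: if $\gamma_t = \mathrm{id}$ for all $t$, then $(\alpha,\beta,\mathrm{id}) \cdot (x,y) = (\alpha(x), \beta(y))$, which sends a fiber $\{(t,*) : t \in S^1\}$ to $\{(\alpha(t), \beta(*)) : t \in S^1\} = \{(s, \beta(*)) : s \in S^1\}$ since $\alpha$ is a bijection of $S^1$. This is again a fiber of $F_T$.

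For the forward inclusion, I would take an arbitrary element $(\alpha,\beta,\gamma_t) \in \aut(F_T) \cap (O(2) \times O(3) \times \Omega SO(3))$ and apply it to a typical fiber. The image of $\{(t,*) : t \in S^1\}$ is the set $\{(\alpha(t), \beta(\gamma_t(*))) : t \in S^1\}$. Reparameterizing by $s = \alpha(t)$, which is valid because $\alpha \in O(2)$ acts as a homeomorphism of $S^1$, this image becomes $\{(s, \beta(\gamma_{\alpha^{-1}(s)}(*))) : s \in S^1\}$. For this set to be a fiber of $F_T$, its second coordinate must be independent of $s$, so the map $s \mapsto \beta(\gamma_{\alpha^{-1}(s)}(*))$ is constant; composing with $\beta^{-1}$ and changing variables, the map $t \mapsto \gamma_t(*)$ is constant in $t$ for every fixed $* \in S^2$.

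Now I invoke the base-point condition: since $\gamma \in \Omega SO(3)$ is identity-based, there exists some $t_0 \in S^1$ with $\gamma_{t_0} = \mathrm{id}_{S^2}$, hence $\gamma_{t_0}(*) = *$. Constancy in $t$ then forces $\gamma_t(*) = *$ for every $t \in S^1$ and every $* \in S^2$, so $\gamma_t = \mathrm{id}$ for all $t$. This exhibits $(\alpha,\beta,\gamma_t)$ as an element of $O(2) \times O(3) \times \{\mathrm{id}\}$, completing the inclusion and hence the equality.

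I do not anticipate a genuine obstacle in this argument: the proof is really a single bookkeeping computation checking where a fiber lands, combined with the observation that the $\Omega SO(3)$ factor acts nontrivially on the $S^2$ coordinate in a way that varies with the $S^1$ coordinate and therefore cannot preserve the product fibration unless it is constant. The only point meriting care is to make sure the reparameterization by $\alpha^{-1}$ is written in the correct order, since $\alpha$ acts on the left and one should verify that the $O(3)$ factor $\beta$ can be canceled (which it can, because $\beta$ is a diffeomorphism and fibers of $F_T$ are distinguished precisely by constancy of the $S^2$ coordinate).
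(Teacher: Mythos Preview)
Your proof is correct and follows essentially the same approach as the paper: compute the image of a fiber $S^1 \times \{*\}$ under $(\alpha,\beta,\gamma_t)$ and observe that it can only be an $F_T$-fiber when $\gamma_t(*)$ is independent of $t$, which by the base-point condition forces $\gamma_t = \mathrm{id}$. The paper phrases the forward inclusion as a proof by contradiction (exhibiting two distinct points on the image curve with different $S^2$-coordinates), whereas you argue directly via constancy of the second coordinate, but this is only a cosmetic difference.
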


\begin{proof}
Since $O(3)$ acts by rotating along the $S^2$ coordinate of $S^1 \times S^2$, it permutes the fibers of $F_T$. Similarly, $O(2)$ rotates or reflects the fibers, which still preserves $F_T$. Therefore, $O(2) \times O(3) \times \{id\} \subset \aut(F_T) \cap (O(2) \times O(3) \times \Omega SO(3))$. Conversely, suppose $(\alpha, \beta, \lambda) \in \aut(F_T) \cap (O(2) \times O(3) \times \Omega SO(3))$. For contradiction, assume that $\lambda$ is not the trivial loop at the identity. By definition, there exists some $t \in S^1$ and $* \in S^2$ such that $\lambda_t(*) \neq *$. Then $(\alpha, \beta, \lambda)$ acting on the $F_T$-fiber $S^1 \times \{*\}$ can be parameterized by $(\alpha(t), \beta(\lambda_t(*))) \neq (\alpha(t), \beta(*))$. Both fibers share the common point $(\alpha(0), \beta(*))$, so they cannot be the same fiber. Therefore, $(\alpha, \beta, \lambda) \notin \aut(F_T)$. So, $\aut(F_T) \cap (O(2) \times O(3) \times \Omega SO(3)) \subset O(2) \times O(3) \times \{id\}$ and we have equality as desired.
\end{proof}

\begin{remark}
When viewing $O(2) \times O(3) \times \Omega SO(3)$ as a subgroup of $Diff(S^1 \times S^2)$, we use function composition as the group multiplication. This is not the same as the multiplication one would expect on $O(2) \times O(3) \times \Omega SO(3)$. It is not trivial to see that $O(2) \times O(3) \times \Omega SO(3)$ is a subgroup of $Diff(S^1 \times S^2)$ under composition. One way to see this is as follows: notice that $O(3) \times \Omega SO(3)$ is homeomorphic and isomorphic to the space of unbased loops in $O(3)$; call this Lie group $\Omega' O(3)$. Then let $(\alpha, \beta_t) \in O(2) \times \Omega'O(3)$ correspond to 
\begin{equation}   
	(x, y) \mapsto (\alpha(x), \beta_x(y))
\end{equation}
Then the group multiplication according to function composition is as follows: 
\begin{equation}
	(x, y) \xrightarrow{(\alpha, \beta_t)}(\alpha(x), \beta_x(y)) \xrightarrow{(\alpha', \beta_t')} (\alpha' \circ \alpha(x), \beta'_{\alpha(x)} \circ \beta_x(y))
\end{equation}
which means that 
\begin{equation}
	(\alpha', \beta'_t) \cdot (\alpha, \beta_t) = (\alpha' \cdot \alpha, \beta_{\alpha(t)}' \cdot \beta_t)
\end{equation}
where multiplication in $\Omega'O(3)$ is pointwise. To show associativity:
\begin{align}
	[(\alpha'', \beta_t'') \cdot (\alpha', \beta_t')] \cdot (\alpha, \beta_t) &= (\alpha'' \cdot \alpha', \beta_{\alpha'(t)}'' \cdot \beta'_t) \cdot (\alpha, \beta_t) \\ &= (\alpha'' \cdot \alpha' \cdot \alpha, \beta''_{\alpha' \cdot \alpha(t)} \cdot \beta_{\alpha(t)}' \circ \beta_t) \\ &= (\alpha'', \beta_t'') \cdot (\alpha' \cdot \alpha, \beta_{\alpha(t)}' \circ \beta_t) \\ &= (\alpha'', \beta_t'') \cdot (\alpha', \beta_t') \cdot (\alpha, \beta_t)
\end{align}
Finally, an inverse is given by 
\begin{equation}
	(\alpha, \beta_t)^{-1} = (\alpha^{-1}, \beta_{\alpha^{-1}(t)}^{-1})
\end{equation}
since 
\begin{equation}
	(\alpha, \beta_t) \cdot (\alpha^{-1}, \beta_{\alpha^{-1}(t)}^{-1}) = (\alpha \cdot \alpha^{-1}, \beta_{\alpha^{-1}(t)} \cdot \beta_{\alpha^{-1}(t)}^{-1}) = (\text{id}_{S^1}, \text{id}_{S^2})
\end{equation}
and 
\begin{equation}
	(\alpha^{-1}, \beta_{\alpha^{-1}(t)}^{-1}) \cdot (\alpha, \beta_t) = (\alpha^{-1} \cdot \alpha, \beta_{\alpha^{-1} \cdot \alpha(t)}^{-1} \cdot \beta_t = (\text{id}_{S^1}, \beta_t^{-1} \cdot \beta_t) = \text{id}_{S^1}, \text{id}_{S^2})
\end{equation}
By picking a basepoint $* \in S^1$, we see that the function 
\begin{equation}
	(x, y) \mapsto (\alpha(x), \beta_x(y))
\end{equation}
is equal to 
\begin{equation}
	(x, y) \mapsto (\alpha(x), \beta_* \circ (\beta_*^{-1} \circ \beta_x(y)))
\end{equation}
which corresponds to $(\alpha, \beta_*, \beta_*^{-1} \circ \beta_t) \in O(2) \times O(3) \times \Omega SO(3)$. Thus, the set of functions corresponding to $O(2) \times \Omega' O(3)$ is equal to the set of functions corresponding to $O(2) \times O(3) \times \Omega SO(3)$ inside $Diff(S^1 \times S^2)$. Then endow $O(2) \times O(3) \times \Omega SO(3)$ with the same subgroup structure as $O(2) \times \Omega'O(3)$. This group structure is very different from pointwise multiplication!
\end{remark}

\begin{lemma}\label{lma:selfhomeos}
Given $\alpha \in O(2), \beta \in O(3), \gamma \in \Omega SO(3)$, let $\zeta_{\alpha, \beta, \gamma} = \beta^{-1} \circ \gamma_{\alpha(t_0)} \circ \beta$. Then the map $c_{\alpha, \beta}: \Omega SO(3) \to \Omega SO(3)$ defined by $c_{\alpha, \beta}: \gamma_t \mapsto \beta^{-1} \circ \gamma_{\alpha(t)} \circ \beta \circ \zeta_{\alpha, \beta, \gamma}^{-1}$ is a homeomorphism.
\end{lemma}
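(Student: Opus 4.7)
The plan is to verify three things in turn: that $c_{\alpha,\beta}$ maps into $\Omega SO(3)$, that it is continuous, and that it admits a continuous inverse.

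First I would check that $c_{\alpha,\beta}(\gamma)$ is an identity-based loop in $SO(3)$. Since $\beta \in O(3)$, conjugation $X \mapsto \beta^{-1} X \beta$ preserves orientation and so carries $SO(3)$ to itself; hence each value $\beta^{-1} \gamma_{\alpha(t)} \beta$ lies in $SO(3)$, and the same reasoning places $\zeta_{\alpha,\beta,\gamma}$ (and its inverse) in $SO(3)$, so the pointwise product stays in $SO(3)$. Evaluating the defining formula at $t = t_0$ collapses it to $\zeta_{\alpha,\beta,\gamma} \circ \zeta_{\alpha,\beta,\gamma}^{-1} = \mathrm{id}$, confirming the loop is identity-based.

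Continuity of $c_{\alpha,\beta}$ then follows from standard properties of the compact-open topology on loop spaces: the reparameterization $\gamma \mapsto \gamma \circ \alpha$, the pre- and post-compositions by the fixed elements $\beta$ and $\beta^{-1}$, the evaluation $\gamma \mapsto \gamma_{\alpha(t_0)}$ used to build $\zeta_{\alpha,\beta,\gamma}$, and pointwise multiplication of loops in the Lie group $SO(3)$ are each continuous.

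The main technical step is producing the inverse. Given $\delta \in \Omega SO(3)$, if $\delta = c_{\alpha,\beta}(\gamma)$ then solving the defining formula for $\gamma_{\alpha(t)}$ and imposing $\gamma_{t_0} = \mathrm{id}$ at the value $t = \alpha^{-1}(t_0)$ forces $\zeta_{\alpha,\beta,\gamma} = \delta_{\alpha^{-1}(t_0)}^{-1}$, so the ``twist'' is recoverable from $\delta$ alone. This produces the candidate
$$c_{\alpha,\beta}^{-1}(\delta)_t \;=\; \beta \circ \delta_{\alpha^{-1}(t)} \circ \delta_{\alpha^{-1}(t_0)}^{-1} \circ \beta^{-1},$$
which I would check by direct substitution to be a two-sided inverse, noting that at $t = t_0$ the two $\delta$-factors cancel so the result genuinely lies in $\Omega SO(3)$. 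Continuity of $c_{\alpha,\beta}^{-1}$ then follows from the same compact-open topology bookkeeping used for $c_{\alpha,\beta}$.

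The step I expect to demand the most care is verifying that the auxiliary element $\zeta_{\alpha,\beta,\gamma}^{-1}$ (and symmetrically $\delta_{\alpha^{-1}(t_0)}^{-1}$) depends continuously on its loop. This reduces to continuity of the evaluation map $\Omega SO(3) \times S^1 \to SO(3)$, which holds because $SO(3)$ is locally compact Hausdorff, combined with continuity of inversion in the topological group $SO(3)$. Once these facts are in place, the rest is a routine application of the exponential law and joint continuity of multiplication and composition.
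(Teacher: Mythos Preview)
Your proposal is correct and follows the same overall strategy as the paper --- exhibit an explicit inverse and argue continuity in both directions --- but your execution is more careful in two respects. First, your inverse formula $c_{\alpha,\beta}^{-1}(\delta)_t = \beta\,\delta_{\alpha^{-1}(t)}\,\delta_{\alpha^{-1}(t_0)}^{-1}\,\beta^{-1}$ includes the rebasing factor $\delta_{\alpha^{-1}(t_0)}^{-1}$, which is needed for the result to be an identity-based loop; the paper's stated inverse $\gamma_t \mapsto \beta\,\gamma_{\alpha^{-1}(t)}\,\beta^{-1}$ omits this and hence does not land in $\Omega SO(3)$ when $\alpha(t_0)\neq t_0$. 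Second, for continuity the paper argues by tracking subbasic open sets of the compact-open topology, but since $\zeta_{\alpha,\beta,\gamma}$ varies with $\gamma$ the image of a subbasic set is not itself subbasic, so that argument needs the kind of evaluation-map and pointwise-multiplication continuity you invoke anyway. Your appeal to continuity of evaluation (using local compactness of $SO(3)$), of pointwise multiplication and inversion in a topological group, and of reparameterization by a fixed homeomorphism is the clean way to handle both $c_{\alpha,\beta}$ and its inverse.
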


\begin{proof}
The map $c_{\alpha, \beta}$ has an inverse given by $c_{\alpha, \beta}^{-1}: \gamma_t \mapsto \beta \circ \gamma_{\alpha^{-1}(t)} \circ \beta^{-1}$, so it is a bijection. Now suppose $U \subset \Omega SO(3)$ is an element of the subbase of the compact open topology, i.e. $U$ consists of maps that take some compact $K \subset S^1$ into an open $V \subset SO(3)$. Then $c_{\alpha, \beta}(U)$ consists of maps that take $\alpha(K)$ into $\beta V \beta^{-1}\zeta_{\alpha, \beta}^{-1}$; since $\alpha \in O(2)$ and $\beta \in O(3)$, $\alpha(K)$ is compact and $\beta V \beta^{-1}\zeta_{\alpha, \beta}^{-1}$ is open. Hence $c_{\alpha, \beta}$ is an open map. An exactly similar argument shows that $c_{\alpha, \beta}^{-1}$ is also open. Thus $c_{\alpha, \beta}$ is a homeomorphism.
\end{proof}

\begin{lemma}
The sequence 
\begin{equation}
	O(2) \times O(3) \to O(2) \times O(3) \times \Omega SO(3) \to SF^{rigid}
\end{equation}
is a fiber bundle.
\end{lemma}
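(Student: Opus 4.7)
The plan is to construct an explicit continuous global section of the projection $\pi : G \to SF^{rigid}$, $g \mapsto g \cdot F_T$, where $G := O(2) \times O(3) \times \Omega SO(3)$; this will realize the sequence as a (globally trivial) topological fiber bundle with fiber $H := O(2) \times O(3) \times \{\mathrm{id}\}$. By the preceding lemma, $H$ is precisely the stabilizer of $F_T$ in $G$, so right multiplication by $H$ preserves $\pi$-fibers and each fiber is a single $H$-coset. That $\pi$ itself is continuous follows by pulling back through the homeomorphism $\Phi$: the composite $\Phi^{-1} \circ \pi$ sends $(\alpha, \beta, \lambda)$ to the loop $t \mapsto \beta \circ \lambda_{\alpha^{-1}(t)} \circ \lambda_{\alpha^{-1}(t_0)}^{-1} \circ \beta^{-1}$ (as derived in the proof of Proposition \ref{prop:bijection}), which is built from evaluation, composition, and inversion in $SO(3)$, all continuous operations in the compact-open topology since $SO(3)$ is locally compact Hausdorff.

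Next I would define the section $s : SF^{rigid} \to G$ by $s(F) := (\mathrm{id}_{S^1}, \mathrm{id}_{S^2}, \Phi^{-1}(F))$; this is continuous because $\Phi$ is a homeomorphism and the inclusion of $\Omega SO(3)$ as the third factor of $G$ is continuous. The identity $\pi \circ s = \mathrm{id}_{SF^{rigid}}$ is immediate from the formula above, using $\mu_{t_0} = \mathrm{id}$ for $\mu \in \Omega SO(3)$. The section then yields a global trivialization $\Psi : SF^{rigid} \times H \to G$ by $\Psi(F, h) := s(F) \cdot h$, with candidate inverse $\Psi^{-1}(g) := \bigl(\pi(g),\, s(\pi(g))^{-1} \cdot g\bigr)$. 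The second coordinate lies in $H$ because $H$ stabilizes $F_T$, giving $\pi(s(\pi(g))^{-1} \cdot g) = s(\pi(g))^{-1} \cdot \pi(g) = F_T$; the two maps are mutually inverse by a short direct check, and continuity of both reduces to continuity of the group operations in $G$ together with continuity of $\pi$ and $s$.

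The main obstacle, such as it is, is bookkeeping: the group law on $G$ is the twisted multiplication described in the preceding remark (inherited from composition of diffeomorphisms) rather than the product multiplication, so care is required when verifying the trivialization formulas and when checking that right-multiplication by $H$ really does preserve $\pi$-fibers. Once the section $s$ is in hand, however, the argument is the standard principal-coset-bundle construction, and no Cerf-Palais-type machinery (e.g.\ Theorem 3.7.3 of \cite{hkmr}) is needed at this step.
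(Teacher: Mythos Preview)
Your argument is correct and is in fact cleaner than the paper's. The paper builds an explicit trivialization $\phi(\alpha,\beta,\gamma) = ((\alpha,\beta,\gamma)\cdot F_T,\, \alpha,\beta)$ and then writes down a candidate inverse $\phi^{-1}(F,\alpha,\beta) = (\alpha,\beta,\, c_{\alpha,\beta}(\Phi^{-1}(F)))$ using the auxiliary self-homeomorphisms $c_{\alpha,\beta}$ of Lemma~\ref{lma:selfhomeos}, and checks by hand that these are mutually inverse homeomorphisms. You instead observe that $\Phi^{-1}$ already furnishes a global continuous section $s(F) = (\mathrm{id},\mathrm{id},\Phi^{-1}(F))$ of $\pi$, and then invoke the standard principal-coset trick $\Psi(F,h)=s(F)\cdot h$, reducing everything to continuity of $\pi$, $s$, and the (twisted) group operations on $G$. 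Both routes ultimately rest on the same ingredient, namely the homeomorphism $\Phi$, but yours bypasses the ad hoc formula for $\phi^{-1}$ and the separate Lemma~\ref{lma:selfhomeos}, and makes transparent that the bundle is globally trivial rather than merely locally so. The paper's approach has the minor advantage of making the fiber coordinate $(\alpha,\beta)$ literally the first two entries of $g$, whereas in your trivialization the $H$-coordinate $s(\pi(g))^{-1}g$ does not have such a simple description; but for the purposes of the Cerf--Palais argument this is irrelevant. Your caution about the twisted multiplication is well placed: the verifications that $s(\pi(g))^{-1}g \in H$ and that $\Psi,\Psi^{-1}$ are mutually inverse only use that $H$ is exactly the $\pi$-stabilizer of $F_T$, so they go through regardless of the specific group law.
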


\begin{proof}
We explicitly construct a local trivialization. Let 
\begin{equation}
	\mathfrak{p}: O(2) \times O(3) \times \Omega SO(3) \to SF^{rigid}
\end{equation}
be the latter map in the sequence, defined by the group action. Note that $\mathfrak{p}$ is an open map by Proposition \ref{prop:bijection} and the resulting topology on $SF^{rigid}$. Let $\mathcal{V} = \mathfrak{p}(O(2) \times SO(3) \times V')$, which is open. The local trivialization is a homeomorphism
\begin{equation}
	\phi: \mathfrak{p}^{-1}(\mathcal{V}) \to \mathcal{V} \times (O(2) \times O(3))
\end{equation}
Now define the local trivialization as follows: given $\alpha \in O(2), \beta \in SO(3), \gamma \in V'$, let $f = (\alpha, \beta, \gamma)$. Then 
\begin{equation}
	\phi(f) = (f(T), \alpha, \beta)
\end{equation}
We now show that this is a homeomorphism. Let $\gamma_t \in \Omega SO(3) = \Phi^{-1}(F)$. Then $F$ has fibers given by $(t, \gamma_t(*))$ for each $* \in S^2$. So let $\phi^{-1}(F, \alpha, \beta) = (\alpha, \beta, \beta^{-1} \circ \gamma_{\alpha(t)} \circ \beta \circ \zeta_{\alpha, \beta, \gamma}^{-1})$, where $\zeta_{\alpha, \beta, \gamma}$ is defined as in Lemma \ref{lma:selfhomeos}. The action of $\phi^{-1}(F, \alpha, \beta)$ on $F_T$ has fibers given by $(\alpha(t), \gamma_{\alpha(t)}(\beta(\zeta_{\alpha, \beta, \gamma}^{-1}(*))))$, which are the same as those given by $(t, \gamma_t(*))$. This shows that $\phi \circ \phi^{-1}$ is the identity. To show that $\phi^{-1} \circ \phi$ is the identity, note that given $(\alpha, \beta, \gamma_t)$ acting on $F_T$, the resulting fibering has fibers $(\alpha(t), \beta(\gamma_t(*))$, which are the same as those given by $(t, \beta \circ \gamma_{\alpha^{-1}(t)} \circ (\beta \circ \gamma_{\alpha^{-1}(t_0)})^{-1}(*))$. Thus the corresponding identity-based loop of this fibering is given by $\beta \circ \gamma_{\alpha^{-1}(t)} \circ (\beta \circ \gamma_{\alpha^{-1}(t_0)})^{-1}$. Note that when shifted by $\alpha$, this becomes $\beta \circ \gamma_t \circ \gamma_{\alpha^{-1}(t_0)}^{-1} \circ \beta^{-1}$. Therefore, $\phi^{-1} \circ \phi(\alpha, \beta, \gamma_t) = (\alpha, \beta, \beta^{-1} \circ (\beta \circ \gamma_t \circ \gamma_{\alpha^{-1}(t_0)}^{-1} \circ \beta^{-1}) \circ \beta \circ \zeta_{\alpha, \beta, \beta \circ \gamma_{\alpha^{-1}(t)} \circ (\beta \circ \gamma_{\alpha^{-1}(t_0)})^{-1}}^{-1}) = (\alpha, \beta, \gamma_t \circ  \gamma_{\alpha^{-1}(t_0)}^{-1} \circ (\beta^{-1} \circ (\beta \circ \gamma_{t_0} \circ \gamma_{\alpha^{-1}(t_0)}^{-1} \circ \beta^{-1}) \circ \beta)^{-1}) = (\alpha, \beta, \gamma_t \circ \gamma_{\alpha^{-1}(t_0)}^{-1} \circ \gamma_{\alpha^{-1}(t_0)})$, as desired. Thus $\phi$ is a bijection, and by Proposition \ref{prop:bijection} and Lemma \ref{lma:selfhomeos} it follows that $\phi^{-1}$ (and hence $\phi$) is a homeomorphism.
\end{proof}

\subsection{Proof of Theorem \ref{thm:main}}\label{sec:defretract}


By definition of $SF(S^1 \times S^2, T)$, $\text{Diff}(S^1 \times S^2)$ acts transitively on $SF(S^1 \times S^2, T)$. The next lemma follows from Theorem 3.7.3 in \cite{hkmr}.

\begin{lemma}
The sequence 
\begin{equation}
	\aut(F_T) \to \text{Diff}(S^1 \times S^2) \to SF(S^1 \times S^2, T)
\end{equation}
is a fiber bundle.
\end{lemma}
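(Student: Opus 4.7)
The plan is to derive this directly from Theorem 3.7.3 of \cite{hkmr}, as flagged immediately before the lemma statement. That theorem gives a general criterion under which the orbit map of $\text{Diff}(M)$ on the space of Seifert fiberings equivalent to a given $F_p$ is a locally trivial fiber bundle in the Frechet category, with fiber the stabilizer $\aut(F_p)$. Since $\text{Diff}(S^1 \times S^2)$ acts on $SF(S^1 \times S^2, T)$ transitively by construction (as recorded just above the lemma), and the isotropy subgroup of $F_T$ is by definition $\aut(F_T)$, the sequence in question is exactly the principal bundle produced by Theorem 3.7.3 applied to the pair $(S^1 \times S^2, T)$.

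The main task is then to verify the hypotheses of Theorem 3.7.3 in this setting. First I would observe that $S^1 \times S^2$ is a compact smooth Seifert-fibered 3-manifold, presented as an honest $S^1$-bundle over $S^2$, and that $T$ has no exceptional fibers, so any conditions in the theorem concerning behavior near singular fibers are automatic. Second, I would check that $\aut(F_T) \subset \text{Diff}(S^1 \times S^2)$ is a closed subgroup: closedness follows because ``sending each fiber of $F_T$ to a fiber of $F_T$'' is a pointwise condition preserved under uniform $C^\infty$ convergence, and the Frechet Lie subgroup structure on $\aut(F_T)$ is precisely what the cited theorem is engineered to output under these hypotheses.

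The step that carries the real weight is the existence of a local cross section to the orbit map $\text{Diff}(S^1 \times S^2) \to SF(S^1 \times S^2, T)$ in a neighborhood of $F_T$; this is the ``straightening'' input that lets one realize a small perturbation of $F_T$ as the image of $F_T$ under a continuously varying diffeomorphism. This is the part of the argument I expect to be the main obstacle in general, although for $(S^1 \times S^2, T)$ it is unusually benign: $T$ is a genuine smooth $S^1$-bundle, and fiberings $C^\infty$-close to $F_T$ can be straightened by a family of diffeomorphisms obtained essentially by tracking the perturbed fibers along the $S^1$ direction. In any case, the general Cerf--Palais style construction of such a local section is exactly the content abstracted by Theorem 3.7.3, so I would simply invoke it. Once the local section is available, translating it by elements of $\text{Diff}(S^1 \times S^2)$ produces local trivializations over all of $SF(S^1 \times S^2, T)$, and the fiber bundle structure, and hence the lemma, follows.
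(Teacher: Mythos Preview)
Your proposal is correct and follows exactly the same approach as the paper: the paper simply states that the lemma follows from Theorem~3.7.3 in \cite{hkmr}, with no further argument given. Your elaboration on the hypotheses (compactness, closedness of $\aut(F_T)$, and the existence of local sections) is more detailed than what the paper provides, but the underlying strategy is identical.
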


We thus obtain the fiber bundle diagram

\begin{equation}
\begin{tikzcd}
\aut(F_T) \arrow[r]                     & Diff(S^1 \times S^2) \arrow[r]                           & SF(S^1 \times S^2, T) \\
O(2) \times O(3) \arrow[r] \arrow[u] & O(2) \times O(3) \times \Omega SO(3) \arrow[r] \arrow[u] & SF^{rigid} \arrow[u]       
\end{tikzcd}
\end{equation}

By Hatcher's result, the middle arrow is a homotopy equivalence.

\begin{lemma}
The left arrow in the above diagram, i.e. $O(2) \times O(3) \to \aut(F_T)$, is a homotopy equivalence.
\end{lemma}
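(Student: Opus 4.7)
The plan is to realize $\aut(F_T)$ as the total space of a fiber bundle over $\mathrm{Diff}(S^2)$ and then compare, via a map of bundles, with the product $O(2) \times O(3) \to O(3)$. Any fiber-preserving diffeomorphism of $F_T$ has the form $(x,y) \mapsto (\psi(y)(x), \phi(y))$ with $\phi \in \mathrm{Diff}(S^2)$ and $\psi \in C^\infty(S^2, \mathrm{Diff}(S^1))$, so the projection $q: \aut(F_T) \to \mathrm{Diff}(S^2),\ f \mapsto \phi$, admits the global section $\phi \mapsto \mathrm{id}_{S^1} \times \phi$ and is therefore a trivial fiber bundle with fiber $C^\infty(S^2, \mathrm{Diff}(S^1))$. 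The subgroup inclusion $O(2) \times O(3) \hookrightarrow \aut(F_T)$ sends $(\alpha,\beta)$ to $(\phi, \psi) = (\beta,\, \mathrm{const}_\alpha)$, so it fits into a ladder of fibrations
\[
\begin{tikzcd}
O(2) \arrow[r] \arrow[d, "\iota"'] & O(2) \times O(3) \arrow[r] \arrow[d] & O(3) \arrow[d, "j"] \\
C^\infty(S^2, \mathrm{Diff}(S^1)) \arrow[r] & \aut(F_T) \arrow[r, "q"] & \mathrm{Diff}(S^2)
\end{tikzcd}
\]
where $\iota(\alpha) = \mathrm{const}_\alpha$ and $j$ is the standard inclusion.

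Next I verify that $\iota$ and $j$ are homotopy equivalences. The right vertical map $j$ is a homotopy equivalence by Smale's theorem for $S^2$. I factor $\iota$ as
\[
O(2) \hookrightarrow \mathrm{Diff}(S^1) \xrightarrow{\mathrm{const}} C^\infty(S^2, \mathrm{Diff}(S^1)),
\]
where the first inclusion is a homotopy equivalence by Smale's theorem for $S^1$. For the second, evaluation at a basepoint $* \in S^2$ yields a fibration
\[
\Omega^2 \mathrm{Diff}(S^1) \longrightarrow C^\infty(S^2, \mathrm{Diff}(S^1)) \xrightarrow{\mathrm{ev}_*} \mathrm{Diff}(S^1)
\]
for which inclusion of constants is a section. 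Because $\mathrm{Diff}(S^1) \simeq O(2) \simeq S^1 \sqcup S^1$ has $\pi_n = 0$ for $n \geq 2$, the fiber $\Omega^2 \mathrm{Diff}(S^1)$ is weakly contractible, so the section is a weak equivalence, and hence $\iota$ is a weak homotopy equivalence.

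With $\iota$ and $j$ weak equivalences, applying the five lemma to the induced ladders of long exact sequences of homotopy groups gives that $O(2) \times O(3) \to \aut(F_T)$ is a weak homotopy equivalence. Since $O(2) \times O(3)$ is a finite-dimensional Lie group and $\aut(F_T)$ is a separable Frechet manifold by the general theory underlying Theorem 3.7.3 of \cite{hkmr} — and hence has the homotopy type of a CW complex — Whitehead's theorem promotes this to a genuine homotopy equivalence.

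The most delicate step I expect is the mapping-space computation $C^\infty(S^2, \mathrm{Diff}(S^1)) \simeq O(2)$: although the relevant homotopy groups vanish for soft reasons, one must check that evaluation at $*$ is genuinely a Serre fibration in the $C^\infty$ topology and that the section is a homotopy inverse up to CW-approximation. Everything else — the explicit global triviality of $q$ afforded by the product structure $S^1 \times S^2 \to S^2$, the factorization of $\iota$ through $\mathrm{Diff}(S^1)$, and the final assembly via the five lemma — is essentially formal once the Smale equivalences for $S^1$ and $S^2$ are invoked.
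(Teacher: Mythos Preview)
Your proof is correct and follows essentially the same route as the paper: both identify $\aut(F_T)$ with $C^\infty(S^2,\mathrm{Diff}(S^1)) \times \mathrm{Diff}(S^2)$ and invoke Smale's theorems for $S^1$ and $S^2$. The only real difference is in the step $C^\infty(S^2,\mathrm{Diff}(S^1)) \simeq O(2)$: where you argue via the evaluation fibration with weakly contractible fiber $\Omega^2\mathrm{Diff}(S^1)$, the paper instead lifts maps $S^2 \to S^1$ to $\mathbb{R}$ and writes down an explicit straight-line deformation retraction onto constants by averaging.
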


\begin{proof}
Since $S^1 \times S^2$ is a trivial $S^1$-bundle over $S^2$, we know that
\begin{equation}
	\aut(F_T) = C^\infty(S^2, Diff(S^1)) \times Diff(S^2)
\end{equation}
It is well-known \cite{Gluck} that $Diff(S^1)$ is smoothly homotopy equivalent to $O(2)$. As a result,
\begin{equation}
	C^\infty(S^2, O(2)) \hookrightarrow C^\infty(S^2, Diff(S^1))
\end{equation}  
is a homotopy equivalence. But every map $S^2 \to S^1$ is homotopically trivial, and can be deformed to a constant map in a canonical way, as follows: take any $f: S^2 \to S^1$ and lift to a map $\tilde{f}: S^2 \to \mathbb{R}$. Take the average value 
\begin{equation}
	A(\tilde{f}) = \frac{1}{4\pi}\int_{S^2}\tilde{f}(x)dx
\end{equation} 
and let $\tilde{f}_c$ denote the constant map taking all of $S^2$ to $A(\tilde{f})$. Then the homotopy 
\begin{equation}
	(1 - t)\tilde{f} + t\tilde{f}_c
\end{equation}
deforms $\tilde{f}$ to a constant map to $\mathbb{R}$. This is independent of the lift $S^1 \to \mathbb{R}$, and so project to $S^1$ to produce canonical deformations of smooth maps $S^2 \to S^1$ to constant ones. This is a smooth homotopy equivalence from $C^\infty(S^2, O(2))$ to $O(2)$. In addition, by \cite{smale}, $Diff(S^2)$ is homotopy equivalent to $O(3)$. In summary, we have shown that 
\begin{equation}
	\aut(F_T) = C^\infty(S^2, Diff(S^1)) \times Diff(S^2) \simeq O(2) \times O(3)
\end{equation}
as desired.
\end{proof}

\begin{remark}
The authors acknowledge Herman Gluck for commuting the above proof to us.
\end{remark}

This finally leads us to

\begin{theorem}
The right arrow in the above diagram, i.e. $SF^{rigid} \to SF(S^1 \times S^2, T)$, is a homotopy equivalence.
\end{theorem}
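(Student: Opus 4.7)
The plan is to execute steps 6 through 11 of the Cerf-Palais method laid out in Section \ref{sec:background}. First I will extract the long exact sequences of homotopy groups from the two horizontal fiber bundles in the displayed diagram and assemble them into a ladder of the form shown in Diagram \ref{diag:longexact}. By Hatcher's Smale-type theorem the middle vertical map $O(2) \times O(3) \times \Omega SO(3) \to Diff(S^1 \times S^2)$ induces isomorphisms on all homotopy groups, and by the preceding lemma so does the left vertical map $O(2) \times O(3) \to \aut(F_T)$. Applying the Five Lemma to each four-term segment of the ladder then shows that $\pi_n(SF^{rigid}) \to \pi_n(SF(S^1 \times S^2, T))$ is an isomorphism in every degree, so the right vertical arrow is a weak homotopy equivalence.

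Next I will upgrade weak homotopy equivalence to genuine homotopy equivalence via Whitehead's theorem, which requires both spaces to have the homotopy type of CW complexes. On the source side, $SF^{rigid}$ is homeomorphic to $\Omega SO(3)$ via the map $\Phi$ of Proposition \ref{prop:bijection}, and based loop spaces of compact Lie groups are well known to have the homotopy type of CW complexes. On the target side, Theorem 3.7.3 of \cite{hkmr}, applied to the fiber bundle $\aut(F_T) \to Diff(S^1 \times S^2) \to SF(S^1 \times S^2, T)$, shows that $SF(S^1 \times S^2, T)$ is a separable topological Frechet manifold, and separable Frechet manifolds have CW homotopy type. Whitehead's theorem then promotes the weak equivalence to a genuine homotopy equivalence.

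The main obstacle is not conceptual but bookkeeping: one must check that the hypotheses of Theorem 3.7.3 of \cite{hkmr} (smoothness and transitivity of the action in the Frechet category) truly do apply here, and one must handle basepoints consistently in the ladder of long exact sequences. The transitivity of the $Diff(S^1 \times S^2)$-action on $SF(S^1 \times S^2, T)$ is already observed at the start of Section \ref{sec:defretract}, the smoothness conditions are standard for diffeomorphism-group actions on spaces of fiberings, and picking the trivial fibering $F_T$ together with the identity diffeomorphism as compatible basepoints across the two rows makes the ladder commute in every degree.
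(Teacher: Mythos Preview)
Your proposal is correct and follows essentially the same route as the paper: extract the ladder of long exact sequences from the two fiber bundles, invoke Hatcher's theorem and the preceding lemma for the outer vertical maps, apply the Five Lemma to get a weak equivalence, and then upgrade via Whitehead after verifying CW homotopy type on both sides (via $\Omega SO(3)$ on the source and Theorem 3.7.3 of \cite{hkmr} on the target). The paper's own proof is the same, only omitting the basepoint and smoothness bookkeeping remarks you include.
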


\begin{proof}
Take the fibration exact sequence in each fiber bundle. We end up with the following diagram: 
\begin{equation}
	\begin{tikzcd}
	\dots \arrow[r] & \pi_n(Diff(S^1 \times S^2)) \arrow[r]                            & \pi_n(SF(S^1 \times S^2, T)) \arrow[r] & \pi_{n-1}(\aut(F_T)) \arrow[r]                     & \dots \\
	\dots \arrow[r] & \pi_n(O(2) \times O(3) \times \Omega SO(3)) \arrow[r] \arrow[u] & \pi_n(SF^{rigid}) \arrow[r] \arrow[u]       & \pi_{n-1}(SO(2) \times O(3)) \arrow[r] \arrow[u] & \dots
	\end{tikzcd}
\end{equation}
By Hatcher's result and the above lemma, the left and right upward arrows are isomorphisms for all $n$. Then by the Five Lemma, for all $n$, the middle up arrow is also an isomorphism. Thus the middle arrow of the fiber bundle diagram $SF^{rigid} \to SF(S^1 \times S^2, T)$, is a weak homotopy equivalence. We know that $SF^{rigid} \cong \Omega SO(3)$, which is a CW complex (see \cite{milnor}). By Theorem 3.7.3 in \cite{hkmr}, $SF(S^1 \times S^2, T)$ is a smooth separable Frechet manifold, which is metrizable, and in particular has the homotopy type of a CW-complex. By Whitehead's theorem, $SF^{rigid} \to SF(S^1 \times S^2, T)$ is a homotopy equivalence, as desired. 
\end{proof}

\section{Second homology generators for $SF(S^1 \times S^2)$}

\subsection{The trivial component of $SF^{rigid}$}\label{sec:trivialcomponenth2}


Now that we have established that $SF^{rigid}$ is homotopy equivalent to $SF(S^1 \times S^2, T)$, it is interesting to look at its topological information. For instance, we know that $SF^{rigid}$ is homeomorphic to $\Omega SO(3)$. Since $SO(3) \cong \mathbb{R}P^3$, which is double covered by $S^3$, it follows that $\Omega SO(3)$ has two connected components, each of which is homotopy equivalent to $\Omega S^3$. We begin our investigation of the topology of $SF^{rigid}$ by describing the topology of $\Omega S^3$. It is well-known (e.g. from \cite{hatcherpaper}) that 
\begin{equation}
	H_i(\Omega S^3) = \begin{cases} \mathbb{Z} & i \text{ is even} \\ 0 & i \text{ is odd} \end{cases}
\end{equation}
We now describe homology generators for $\Omega SO(3)$, following the reduced product construction (see \cite{james}, \cite{hatcherbook}). 

\begin{definition}
The \emph{reduced product} $J(S^2)$, where $S^2$ has its basepoint $*$ at the north pole, is given by 
\begin{equation}
	J(S^2) = S^2 \sqcup (S^2)^2 \sqcup (S^2)^3 \sqcup \dots / \sim
\end{equation}
where
\begin{equation}
	(x_1, \dots, x_{k-1}, *, x_{k+1}, \dots, x_n) \sim (x_1, \dots, x_{k-1}, x_{k+1}, \dots, x_n)
\end{equation}
\end{definition}

We have the following useful result from Section 4.J of \cite{hatcherbook}.

\begin{theorem}[\cite{james}]
The loop space $\Omega S^{n+1}$ has the homotopy type of the James reduced product $J(S^n)$. 
\end{theorem}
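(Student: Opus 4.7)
The plan is to construct an explicit map $j: J(S^n) \to \Omega S^{n+1}$, prove it induces an isomorphism on homology, and then apply Whitehead's theorem. Writing $S^{n+1} = \Sigma S^n$, each $x \in S^n$ determines a meridian loop $\ell_x(t) = [x,t] \in \Omega S^{n+1}$, with the basepoint of $S^n$ sent to the constant loop. Since $J(S^n)$ is the free strictly associative topological monoid on $S^n$ (with basepoint as the identity) and $\Omega S^{n+1}$ is a topological monoid under loop concatenation (up to reparameterization), the assignment $x \mapsto \ell_x$ extends uniquely to a continuous monoid map $j((x_1, \dots, x_k)) = \ell_{x_1} * \cdots * \ell_{x_k}$; the defining James relation is respected precisely because $\ell_*$ is the constant loop.

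Next I would compute $H_*$ of both sides. For $J(S^n)$, the filtration $J_1 \subset J_2 \subset \cdots$ by word length endows it with a CW structure whose successive quotients are $J_k/J_{k-1} \cong (S^n)^{\wedge k} \cong S^{nk}$. Thus $J(S^n)$ has exactly one cell in each dimension $nk$, $k \geq 0$, and since adjacent cells have dimensions differing by $n \geq 1$ the cellular differentials vanish for lacunary reasons, giving $H_i(J(S^n)) \cong \mathbb{Z}$ when $n \mid i$ and $0$ otherwise. For $\Omega S^{n+1}$, the same additive pattern follows from the Serre spectral sequence of the path-loop fibration $\Omega S^{n+1} \to PS^{n+1} \to S^{n+1}$: contractibility of $PS^{n+1}$ forces the transgression to pair off each class in degree $nk$ with one in degree $n(k+1)$, leaving the stated pattern of $\mathbb{Z}$'s.

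The crux is showing $j_*$ is an isomorphism. Because $j$ is a monoid map, $j_*$ intertwines the concatenation product on $H_*(J(S^n))$ with the Pontryagin product on $H_*(\Omega S^{n+1})$, so it suffices to check that $j_*$ sends the generator in $H_n$ coming from $J_1 = S^n$ to a generator of $H_n(\Omega S^{n+1})$, and that powers of this class generate all higher-degree homology on both sides. The first statement follows from the loop-suspension adjunction, under which $\mathrm{id}_{S^{n+1}}$ corresponds to the meridian map $x \mapsto \ell_x$, combined with the Hurewicz theorem in the first nontrivial degree. The second follows from the Bott--Samelson computation identifying $H_*(\Omega S^{n+1})$ with the free associative algebra on a single degree-$n$ generator, so that the $k$-fold Pontryagin power generates $H_{nk}$; on the domain side, the cellular generator of $J_k/J_{k-1}$ is likewise the $k$-fold concatenation power of the $J_1$ generator.

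Both $J(S^n)$ and $\Omega S^{n+1}$ are simply connected for $n \geq 1$ and have the homotopy type of a CW complex (the former by its explicit skeletal filtration, the latter by Milnor's theorem on loop spaces of CW complexes). Hence the homology Whitehead theorem promotes $j_*$ to a weak homotopy equivalence, and Whitehead's theorem promotes that to a genuine homotopy equivalence. I expect the main obstacle to be the product-matching step in the third paragraph: while the additive homologies agree by inspection, carefully matching the tensor-algebra structure on $H_*(J(S^n))$ coming from the cell filtration with the Pontryagin product on $H_*(\Omega S^{n+1})$ - with attention to sign conventions and, for even $n$, graded-commutativity subtleties - is what actually forces $j_*$ to hit a generator in every degree rather than merely an integer multiple.
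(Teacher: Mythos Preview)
The paper does not actually prove this theorem: it is stated as a cited result from \cite{james} and Section~4.J of \cite{hatcherbook}, with no proof given. So there is no ``paper's own proof'' to compare against. Your outline is precisely the classical James argument as presented in those references, so in that sense you are on the right track and your proposal matches the source the paper points to.

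That said, there is a genuine gap in your write-up for the case $n=1$. You assert that $J(S^n)$ and $\Omega S^{n+1}$ are simply connected for $n\geq 1$; this is false when $n=1$, since $\pi_1(\Omega S^2)\cong\pi_2(S^2)\cong\mathbb{Z}$. Likewise, your ``lacunary'' argument that the cellular differentials vanish because adjacent cell dimensions differ by $n$ collapses when $n=1$, where the cells sit in every nonnegative dimension. Both issues are reparable---the spaces are H-spaces and hence simple, so the homology Whitehead theorem still applies, and for $n=1$ one can compute the differentials directly or invoke the tensor-algebra structure---but as written the argument only covers $n\geq 2$. Since the paper only uses the case $n=2$ this suffices for its purposes, but you should either restrict the statement or patch the $n=1$ case.

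A smaller technical point: $\Omega S^{n+1}$ with ordinary loop concatenation is not a strict monoid, so the phrase ``extends uniquely to a continuous monoid map'' needs care. The usual fix is either to use Moore loops (where concatenation is strictly associative) or to write down the explicit formula $j(x_1,\dots,x_k)(t)=[x_i,\,kt-(i-1)]$ on $[(i-1)/k,i/k]$ and check by hand that the James relations are respected up to the obvious reparametrization homotopies; Hatcher does the latter.
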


View $S^3$ as the reduced suspension $\Sigma S^2 = (S^2 \times I) / (S^2 \times \partial I \cup \{*\} \times I)$ where $*$ is the basepoint of $S^2$. This can be pictured as below.

\begin{figure}[h]
	\centering
	\includegraphics[scale=.7]{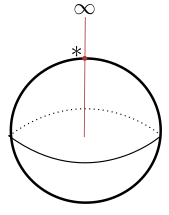}
\end{figure}

Now define a map $\lambda: S^2 \to \Omega S^3$ as follows. Given $x \in S^2$, the loop $\lambda_x$ is given by $\lambda_x(t) = (x, t) \in \Sigma S^2$. Note that this is a loop because $\lambda_x(0) = (x, 0) \sim (x, 1) = \lambda_x(1)$. In the picture, the loop $\lambda_x$ is depicted as follows, with $\lambda_x$ being the green ray, and the red ray being identified to a point as in the above picture.

\begin{figure}[h]
	\centering
	\includegraphics[scale=.7]{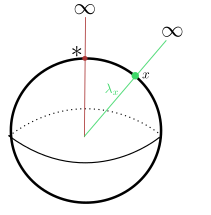}
\end{figure}

One can now define maps $\lambda^k: (S^2)^k \to \Omega S^3$ as follows. Given $(x_1, \dots, x_k) \in (S^2)^k$, we concatenate all the loops to get
\begin{equation}
	\lambda^k_{(x_1, \dots, x_k)}(t) = \begin{cases}\lambda_{x_1}(kt) & 0 \leq t \leq \frac{1}{k} \\ \lambda_{x_2}(kt - 1) & \frac{1}{k} \leq t \leq \frac{2}{k}\\ \dots \\ \lambda_{x_k}(kt - (k-1)) & \frac{k-1}{k} \leq t \leq 1\end{cases}
\end{equation}
Theorem 4J.1 in \cite{hatcherbook} shows that the map
\begin{equation}
\coprod_{k > 0} \lambda^k(x): J(S^2) \to \Omega S^3
\end{equation}
is a weak homotopy equivalence. The second homology group $H_2(\Omega SO(3))$ is thus generated by the image of $\lambda = \lambda^1: S^2 \to \Omega S^3$. 

\medskip

One can clearly see the image of $S^2$ in the reduced suspension picture of $S^3 = \Sigma S^2$. Given $x \in S^2$, the loop $\lambda_x$ can be viewed as the ray coming from the center and going through $x$, then proceeding in a straight line to infinity, which was identified with the center. Then $H_2(\Omega S^3)$ is given by $\{\lambda_x \mid x \in S^2\}$. 

\medskip

In order to find an explicit map to the component of $SF^{rigid}$ containing the trivial fibering, we'll want a more explicit picture of this generator of loops inside $S^3$, as opposed to the reduced suspension $\Sigma S^2$. We prove the following.

\begin{lemma}
Let $\lambda_x'$ be the loop in $S^3$ that goes from the center to the point at infinity, and then returns to the center vertically.

\begin{figure}[h]
	\centering
	\includegraphics[scale=.6]{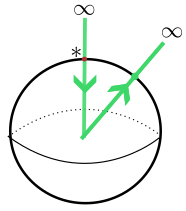}
	\caption{The loop $\lambda'_x$.}
\end{figure}
Then $\{\lambda_x' \mid x \in S^2\}$ generates $H_2(\Omega S^3)$. 
\end{lemma}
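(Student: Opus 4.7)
The plan is to show that the map $\lambda': S^2 \to \Omega S^3$ sending $x \mapsto \lambda_x'$ is homotopic to $\lambda: x \mapsto \lambda_x$, which has already been shown to generate $H_2(\Omega S^3)$ via the James reduced product. Since homotopic maps induce the same map on homology, this will immediately yield the conclusion.

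The key observation is that $\lambda_x'$ decomposes as a concatenation $\lambda_x * \nu^{-1}$ in $\Omega S^3$, where $\nu: [0,1] \to S^3$ is the fixed ``vertical'' loop from the origin out to infinity along the chosen vertical axis (a loop because infinity is identified with the origin), and $*$ denotes loop concatenation. Geometrically, the first half of $\lambda_x'$ traces the ray through $x$ to infinity, which is $\lambda_x$, and the second half traces the vertical axis from infinity back to the origin, which is exactly $\nu^{-1}$. Crucially, $\nu$ is independent of $x$.

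Since $\pi_1(S^3) = 0$, the loop $\nu^{-1}$ is null-homotopic, so there is a path $\gamma_s \in \Omega S^3$, $s \in [0,1]$, with $\gamma_0 = \nu^{-1}$ and $\gamma_1$ the constant loop at the basepoint. Using the H-space multiplication on $\Omega S^3$ given by concatenation, define
\begin{equation}
    H_s: S^2 \to \Omega S^3, \qquad H_s(x) = \lambda_x * \gamma_s,
\end{equation}
which is continuous in $(s,x)$, with $H_0 = \lambda'$ and $H_1(x) = \lambda_x * (\text{basepoint loop})$. The map $H_1$ is in turn homotopic to $\lambda$ by the standard reparameterization showing that concatenation with the constant loop is homotopic to the identity in $\Omega S^3$; this homotopy depends continuously on $x$, so applies to the whole family at once. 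Concatenating these two homotopies yields $\lambda' \simeq \lambda$ as maps $S^2 \to \Omega S^3$.

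Consequently $\lambda'_*[S^2] = \lambda_*[S^2]$ in $H_2(\Omega S^3)$, so $\{\lambda_x' \mid x \in S^2\}$ generates $H_2(\Omega S^3)$. The only step demanding geometric care is the initial identification $\lambda_x' = \lambda_x * \nu^{-1}$, which is immediate from the description in the statement once one fixes the parameterization of each half; the remainder is a routine application of the H-space structure on $\Omega S^3$.
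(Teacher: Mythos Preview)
Your approach---reducing to the James map $\lambda$ by peeling off a fixed ``vertical'' piece via the H-space structure of $\Omega S^3$---is different in spirit from the paper's proof, which instead writes down an explicit homotopy equivalence from the reduced-suspension model $\Sigma S^2$ to the one-point-compactification model of $S^3$ (``pull the point at infinity down to the center vertically''), tracks the image of $\{\lambda_x\}$ under that map, and then geometrically drags the resulting sphere of loops out to $\{\lambda_x'\}$. Your idea is cleaner when it works, but as written there is a genuine type error that has to be repaired.

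The problem is the sentence ``$\nu$ is a loop because infinity is identified with the origin.'' In the model where $\lambda_x'$ is being defined---namely $S^3 = \mathbb{R}^3 \cup \{\infty\}$ with basepoint at the origin---the origin and $\infty$ are \emph{distinct} points. So $\nu$ is only a path from $0$ to $\infty$, and likewise the ``first half'' $\lambda_x$ is a path, not an element of $\Omega S^3$. Consequently neither ``$\nu^{-1}$ is null-homotopic because $\pi_1(S^3)=0$'' nor the target homotopy $\lambda' \simeq \lambda : S^2 \to \Omega S^3$ makes sense as stated: there is no map $\lambda: S^2 \to \Omega S^3$ in this model to be homotopic to. (If instead you work in $\Sigma S^2$, where origin, infinity, and the whole vertical ray \emph{are} all collapsed to the basepoint, then $\nu$ is the constant loop and your identity $\lambda_x' = \lambda_x * \nu^{-1}$ collapses to a reparameterization---but then you have not left the $\Sigma S^2$ picture and the lemma is vacuous.)

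A clean repair in your style: let $q: S^3 \to \Sigma S^2$ be the quotient collapsing the closed vertical ray $\{0\} \cup \nu \cup \{\infty\}$ to the basepoint; this is a based homotopy equivalence since the collapsed set is contractible. Then $q \circ \lambda_x' = (q\circ \lambda_x) * (q\circ\nu^{-1}) = \lambda_x * (\text{const})$, so $(\Omega q)\circ \lambda'$ is homotopic to $\lambda: S^2 \to \Omega\Sigma S^2$ by exactly the reparameterization you wrote. Since $\Omega q$ is a homotopy equivalence and $\lambda$ generates $H_2(\Omega\Sigma S^2)$, it follows that $\lambda'$ generates $H_2(\Omega S^3)$. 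This keeps your H-space argument intact while correctly mediating between the two models; the paper's proof accomplishes the same mediation by the more hands-on geometric homotopy.
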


\begin{proof}
We already know that $\{\lambda_x \mid x \in S^2\}$ is the generator of $H_2(\Omega S^3)$ given by the James reduced product. A homotopy equivalence from the reduced product picture $S^3 = \Sigma S^2$ to $S^3$ is given by ``pulling" the point at infinity down to the center vertically. The image of $\lambda_x$ under this homotopy is a sphere of loops depicted as below.
\begin{figure}[h]\label{figure:generatorold}
	\centering
	\includegraphics[scale=.7]{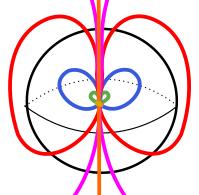}
	\caption{One homology generator of $H_2(\Omega S^3)$. This is homotopy equivalent to $\{\lambda_x' \mid x \in S^2\}$.}
\end{figure}
This can be seen to be homotopy equivalent to $\{\lambda_x' \mid x \in S^2\}$, by ``pulling" the loops toward infinity through the ray at $x \in S^2$. The identity loop (corresponding to the north pole) must be entirely extended to go toward infinity and then return, while the loop corresponding to the south pole does not change at all. In general, each loop corresponding to $x \in S^2$ is extended more and more toward infinity until it matches $\lambda_x'$, giving a homotopy equivalence. Thus $\{\lambda_x' \mid x \in S^2\}$ is a generator of $H_2(\Omega S^3)$, as desired.
\end{proof}

\begin{remark}
The following was pointed out to the authors by Rob Kusner. The homology generator depicted in the above figure can be seen in another way. The loop-suspension duality indicates that
\begin{equation}
	\pi_3(S^3) = \text{Map}(\Sigma S^2, S^3) = \text{Map}(S^2, \Omega S^3) = \pi_2(\Omega S^3)
\end{equation} 
Combining this with the the Hurewicz isomorphism $\pi_2(\Omega S^3) \to H_2(\Omega S^3)$ gives the same generator as in Figure \ref{figure:generatorold}. However, the James reduced product construction allows us to depict higher homology groups of $\Omega S^3$ if we wish, while the Hurewicz map does not. 
\end{remark}

Let $\Omega^T(SO(3))$ be the connected component of $\Omega SO(3)$ containing the trivial loop. Now that we have an explicit generator of $H_2(\Omega S^3)$, we use the double-cover of $S^3$ over $SO(3)$ to translate this into a generator of $H_2(\Omega^T(SO(3)))$. 

\medskip

%
%

In order to associate a loop in $SO(3)$ with a loop in $S^3$, we first project to loops in $\mathbb{R}P^3$. If the loop leaves the sphere, it gets reflected to the antipode. For example, the below loop gets projected to a loop in $\mathbb{R}P^3$, where $\mathbb{R}P^3$ is viewed as the solid 3-ball with antipodal boundary points identified. 

\begin{figure}[h]
	\centering
	\includegraphics[scale=.7]{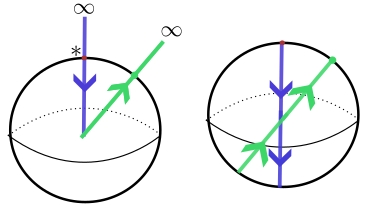}
	\caption{A loop in $S^3$ that goes to infinity via the green ray and comes back to the center via the blue ray, projected down to $\mathbb{R}P^3$.}
\end{figure}


Now we translate this to an element of $\Omega^TSO(3)$, where we view $SO(3)$ as the space of rotations of $S^2$. We take the following isomorphism from $\mathbb{R}P^3 \to SO(3)$. A point on $\mathbb{R}P^3$ is determined by $(x, t) \in S^2 \times [0, \pi]$, with $S^2 \times \{0\}$ being identified, and $(x, \pi) \sim (-x, \pi)$ for all $x \in S^2$. When mapping to $SO(3)$, let $x \in S^2$ be the oriented axis of rotation, and $t \in [0, \pi]$ be the amount of rotation. This homeomorphism is consistent with the equivalence relation, as $S^2 \times \{0\}$ (the center of the 3-ball) is always the identity, and antipodal points on the boundary sphere become rotations by $\pi$, where it does not matter whether the rotation is clockwise or counterclockwise.
%
%
%
%
%
%
%
\medskip

Let $SF^{rigid}_T$ be the image of $\Omega^T(SO(3))$ under $\Phi$, i.e. the component of $SF^{rigid}$ containing the trivial fibering. Utilizing the homeomorphism $\Phi|_{\Omega^TSO(3)}: \Omega^TSO(3) \to SF^{rigid}_T$, we can depict an explicit generator of the trivial component of $H_2(SF^{rigid}_T)$. We will picture $S^1 \times S^2$ as a thickened sphere $S^2 \times [0, 1]$, where the two boundary spheres should be identified. The trivial Seifert fibering $F_T$, under this model, looks like: 

\begin{figure}[h]
	\centering
	\includegraphics[scale=.5]{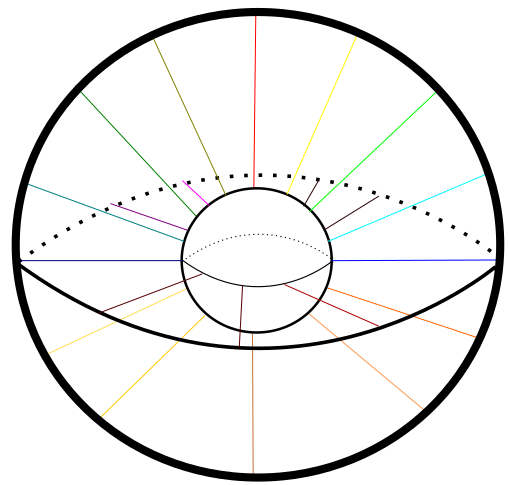}
	\caption{The trivial fibering, which corresponds to the north pole of the generator of $H_2(SF^{rigid})$.}
\end{figure}

The fibers go out radially from the central sphere. The generator of $H_2(SF^{rigid}_T)$ can be described as follows: 

\begin{corollary}
$H^2(SF^{rigid}_T)$ is generated by a sphere of fiberings that first perform a full rotation around the oriented axis corresponding to $x \in S^2$, and then performs a single rotation around the oriented axis corresponding to the south pole.
\end{corollary}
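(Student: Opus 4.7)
The plan is to transport the sphere $\{\lambda_x' \mid x \in S^2\}$ generating $H_2(\Omega S^3)$ from the previous lemma along the chain of homeomorphisms
\begin{equation}
\Omega S^3 \;\cong\; \Omega^T SO(3) \;\xrightarrow{\;\Phi\;}\; SF^{rigid}_T
\end{equation}
and identify the resulting sphere of Seifert fiberings concretely. The first identification comes from the double cover $p: S^3 \to SO(3)$: since $S^3$ is simply connected, every nullhomotopic based loop in $SO(3)$ lifts uniquely to a based loop in $S^3$, and the fact that $p$ is a local homeomorphism makes this lifting continuous, so $\Omega p$ restricts to a homeomorphism $\Omega S^3 \to \Omega^T SO(3)$. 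In particular $\{p \circ \lambda_x' \mid x \in S^2\}$ generates $H_2(\Omega^T SO(3))$.

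The main step is to identify $p \circ \lambda_x'$ as an explicit loop in $SO(3)$. Under the exponential parameterization $SO(3) \cong \mathbb{R}P^3$ described above, the full $2\pi$ rotation loop about an axis $u \in S^2$ is the nontrivial generator of $\pi_1(SO(3))$ whose unique lift to a based path in $S^3$ is precisely half of the great circle from $1$ to $-1$ through direction $u$. Therefore the outgoing leg of $\lambda_x'$, running from the center to infinity along $x$, projects to the full $2\pi$ rotation about $x$, while the vertical return leg, running from infinity back to the center along the south pole axis $s$, projects to the full $2\pi$ rotation about $s$. Concatenating these two halves yields exactly the loop described in the corollary.

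Finally, the homeomorphism $\Phi$ converts each loop $\gamma_t^x = p \circ \lambda_x'$ into the Seifert fibering of $S^1 \times S^2$ whose fibers are $\{(t, \gamma_t^x(y)) : t \in S^1\}$ for $y \in S^2$; this is precisely the fibering that first performs a full rotation of the $S^2$-factor about the axis $x$ and then a full rotation about $s$ as $t$ traverses $S^1$. Because $\Phi$ is a homeomorphism, the sphere $\{\Phi(p \circ \lambda_x') \mid x \in S^2\}$ generates $H_2(SF^{rigid}_T)$, proving the corollary.

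The most delicate point will be the identification step for $p \circ \lambda_x'$: one must commit to compatible conventions for stereographic projection, orientations of the axes $x$ and $s$, and directions of traversal of each leg, and then verify that the vertical return really corresponds to a rotation about $s$ (rather than $-s$ or about an unrelated axis). All other steps are direct applications of the previous lemma, elementary covering-space theory, and the definition of $\Phi$.
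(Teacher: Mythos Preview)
Your proposal is correct and follows essentially the same approach as the paper: the corollary is stated there without a separate proof block because it is meant to follow directly from the preceding discussion, which does exactly what you do—project the generator $\{\lambda_x' \mid x \in S^2\}$ through the double cover $S^3 \to \mathbb{R}P^3 \cong SO(3)$, identify the two legs of $\lambda_x'$ as full rotations about $x$ and about the south pole via the ball model of $\mathbb{R}P^3$, and then apply $\Phi$. Your write-up makes the covering-space step and the identification of each leg more explicit than the paper's narrative treatment, and your closing caveat about orientation conventions is well taken, but the underlying argument is the same.
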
 

The following is a example of one such point in the generator corresponding to the point $p$.

\begin{figure}[h]
	\centering
	\includegraphics[scale=.5]{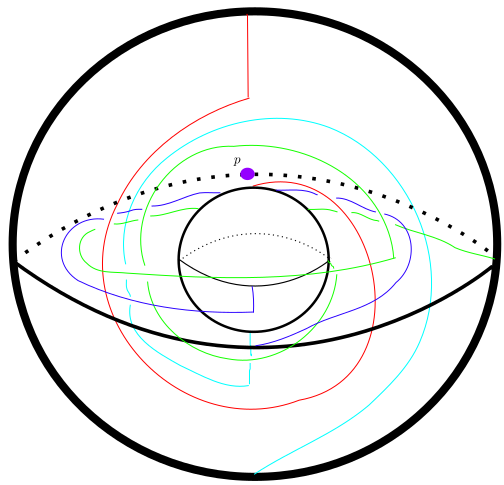}
\end{figure}

\subsection{Gluck twists and the nontrivial component of $SF^{rigid}$}\label{sec:nontrivialcomponenth2}


We are also able to depict a second homology generator of the nontrivial component of $SF^{rigid}$, denoted $SF^{rigid}_N$. We first describe the generator as follows. 

\begin{definition}
Given $p \in S^2$, let $\Lambda_t^p$ be the loop on $S^2$ that completes one full rotation around the oriented axis determined by $p$.
\end{definition}

\begin{remark}
The terminology comes from the \emph{Gluck twist} associated to $x$, which is a self-diffeomorphism of $S^1 \times S^2$ that sends $(x, y) \mapsto (x, \Lambda_x^p)$. Let this map be denoted $G_p$. We get an associated fibering $\Phi(G_p)$. Call this the \emph{Gluck fibering at $p$}.
\end{remark}

\begin{remark}
The Gluck fiberings correspond to the loop in $SO(3) \cong \mathbb{R}P^3$ given by the oriented axis at $p$ inside the ball model of $\mathbb{R}P^3$.
\end{remark}

The Gluck fibering at the north pole looks like the following: 

\begin{figure}[h]
	\centering
	\includegraphics[scale=.5]{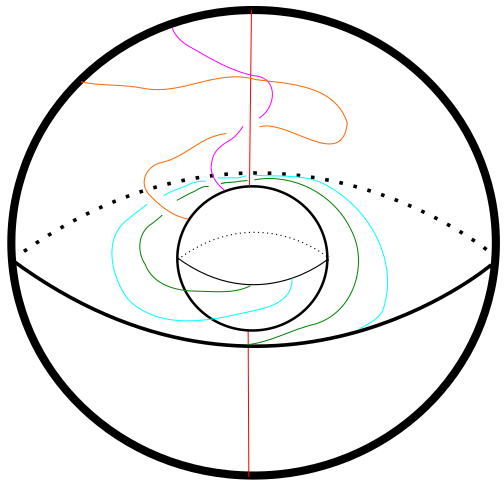}
\end{figure}

Note that the previous generator of $H_2(SF^{rigid}_T)$ can be described as first performing the Gluck fibering $\Lambda_t^p$ and then performing the Gluck fibering at the south pole, for all $p \in S^2$. In other words, the generator of $SF^{rigid}_T$ is given by $\{\Phi(G_p) * \Phi(G_s) \mid p \in S^2\}$, where $s \in S^2$ is the south pole and $*$ denotes concatenation. We will show that the homology generator of the nontrivial component of $SF^{rigid}_N$ can be described as $\{\Phi(G_p) \mid p \in S^2\}$, i.e. the sphere of Gluck fiberings. 

\begin{lemma}\label{lma:infinityloop}
Let $\Omega^\infty S^3$ be the space of paths from the basepoint $*$ to $\infty$, the point at infinity. Then the map $H: \Omega^\infty S^3 \to \Omega S^3$, given by concatenating with the vertical ray from $\infty$ to $*$, is a homotopy equivalence.  
\end{lemma}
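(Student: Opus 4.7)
The plan is to exhibit an explicit homotopy inverse to $H$ and verify that the two compositions are homotopic to the identity by the standard cancellation of a path with its reverse. Let $r : [0,1] \to S^3$ denote the vertical ray from $\infty$ to $*$, so that $H(\gamma) = \gamma \cdot r$ (concatenation), and let $\bar{r}(t) = r(1-t)$ be the vertical ray in the reverse direction, from $*$ to $\infty$. Define $H' : \Omega S^3 \to \Omega^\infty S^3$ by $H'(\ell) = \ell \cdot \bar{r}$. This map is continuous in the compact-open topology because concatenation with a fixed path is a continuous operation on path spaces.

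Next I would check that $H' \circ H \simeq \mathrm{id}_{\Omega^\infty S^3}$ and $H \circ H' \simeq \mathrm{id}_{\Omega S^3}$. For the first, note that $H' \circ H(\gamma) = (\gamma \cdot r) \cdot \bar{r}$; the subpath $r \cdot \bar{r}$ is a null-homotopic loop at $\infty$, and the standard linear null-homotopy (shrinking the portion of the time interval on which $r \cdot \bar{r}$ is traversed down to the constant path at $\infty$), combined with the canonical reparameterization homotopy of $[0,1]$, produces a homotopy from $(\gamma \cdot r) \cdot \bar{r}$ to $\gamma$ that is evidently continuous in $\gamma$. The other composition $H \circ H'(\ell) = (\ell \cdot \bar{r}) \cdot r$ is handled symmetrically, since $\bar{r} \cdot r$ is a null-homotopic loop at $*$.

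The only technical issue is keeping the parameterizations straight: concatenation is not strictly associative, so the cancellation homotopies must be written down with some care to ensure they vary continuously in $\gamma$ and $\ell$. This is entirely routine and is the same template used to prove that $\Omega X$ is a topological monoid up to homotopy. Indeed, this lemma is the special case $X = S^3$, $a = *$, $b = \infty$ of the well-known general fact that for any path-connected space $X$, concatenation with a fixed path from $b$ to $a$ defines a homotopy equivalence $P(X; a, b) \xrightarrow{\simeq} \Omega_a X$. I expect no serious obstacle, and in a final writeup I would either cite this general fact directly or include a short explicit verification along the lines sketched above.
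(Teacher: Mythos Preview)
Your proof is correct and is essentially what the paper does: the paper simply cites the general fact (Exercise~3, p.~291 in Hatcher's \emph{Algebraic Topology}) that for any path-connected space $X$ and points $a,b\in X$, concatenation with a fixed path from $b$ to $a$ gives a homotopy equivalence $P(X;a,b)\simeq\Omega_a X$, while you have written out the standard verification of that exercise explicitly. There is nothing to add.
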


\begin{proof}
This follows from \cite{hatcherbook}, Exercise 3 (page 291). 
\end{proof}

\begin{corollary}
The sphere of Gluck fiberings generates $H_2(\Omega SO(3))$. 
\end{corollary}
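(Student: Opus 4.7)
The plan is to trace the generator of $H_2(\Omega S^3)$ established in the previous subsection through the double cover $\pi : S^3 \to SO(3)$ together with Lemma \ref{lma:infinityloop}. Let $\Omega^N SO(3)$ denote the nontrivial component of $\Omega SO(3)$, so that $\Phi$ identifies $SF^{rigid}_N$ with $\Omega^N SO(3)$. Lifting any nontrivial loop in $SO(3)$ to the universal cover yields a path from the basepoint $* \in S^3$ to its antipode, which in stereographic coordinates we denote by $\infty$; covering space theory thus gives homeomorphisms $SF^{rigid}_N \cong \Omega^N SO(3) \cong \Omega^\infty S^3$.

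Next, Lemma \ref{lma:infinityloop} provides a homotopy equivalence $H : \Omega^\infty S^3 \to \Omega S^3$, namely concatenation with the fixed vertical ray from $\infty$ to $*$. This operation sends the radial ray $\lambda_x$ (from $*$ out to $\infty$ through $x$) precisely to the loop $\lambda_x'$ of the preceding lemma. Since $\{\lambda_x' \mid x \in S^2\}$ generates $H_2(\Omega S^3)$, pulling back along $H$ shows that $\{\lambda_x \mid x \in S^2\}$ generates $H_2(\Omega^\infty S^3)$, and hence $\{\pi \circ \lambda_x \mid x \in S^2\}$ generates $H_2$ of the nontrivial component of $\Omega SO(3)$.

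It then suffices to identify each loop $\pi \circ \lambda_x$ with the Gluck loop $\Lambda^x$. This is the standard quaternionic picture: viewing $S^3 = SU(2)$ and $SO(3) = SU(2)/\{\pm 1\}$, a great semicircle from $1$ to $-1$ through a pure imaginary unit quaternion $v$ descends to the one-parameter family of rotations about the axis $v$ performed through a full turn. Choosing the stereographic coordinates on $S^3$ so that the radial ray through $x \in S^2$ lifts to this semicircle with $v$ the axis determined by $x$, we obtain $\pi \circ \lambda_x = \Lambda^x$, so the corresponding Seifert fibering is $\Phi(G_x)$. Therefore $\{\Phi(G_p) \mid p \in S^2\}$ generates $H_2(SF^{rigid}_N)$, which together with the generator of $H_2(SF^{rigid}_T)$ from the previous subsection accounts for all of $H_2(\Omega SO(3)) \cong H_2(SF^{rigid}_T) \oplus H_2(SF^{rigid}_N)$. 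The principal bookkeeping obstacle is pinning down the stereographic convention that makes radial rays through $x$ match great semicircles for the axis determined by $x$ consistently with the parameterization used by $\Phi$; once fixed, the verification is a routine computation with unit quaternions.
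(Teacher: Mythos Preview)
Your argument is correct and follows essentially the same route as the paper: identify $\Omega^N SO(3)$ with $\Omega^\infty S^3$ via the double cover, invoke Lemma~\ref{lma:infinityloop} so that concatenation with the vertical ray carries the radial rays $\lambda_x$ to the loops $\lambda'_x$ already shown to generate $H_2(\Omega S^3)$, and then pull the generator back. The only difference is cosmetic: the paper phrases the concatenation at the $SO(3)$ level (a map $h:\Omega^N SO(3)\to\Omega^T SO(3)$) and cites the earlier remark that Gluck loops are the diameters of the ball model of $\mathbb{R}P^3$, whereas you work upstairs in $S^3$ and spell out the quaternionic identification $\pi\circ\lambda_x=\Lambda^x$ explicitly.
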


\begin{proof}
Let $\Omega^N(SO(3))$ be the connected component of $\Omega SO(3)$ that does not contain the trivial loop. Note that $\Omega^\infty S^3$ is homeomorphic to $\Omega^N(SO(3))$ via the quotient map. It follows from this and Lemma \ref{lma:infinityloop} that the map $h: \Omega^N(SO(3)) \to \Omega^T(SO(3))$ given by concatenating the downward vertical loop is a homotopy equivalence. The generator of $H_2(\Omega^T(SO(3)))$ is the image of the sphere of Gluck fiberings under $h$. Thus the sphere of Gluck fiberings generates  $H_2(\Omega^N(SO(3)))$.  
\end{proof}

\bibliography{bibliography.bib}
\bibliographystyle{plain}

\end{document}